\newtheorem{theorem}{Theorem}[section]
\newtheorem{corollary}[theorem]{Corollary}
\newtheorem{lemma}[theorem]{Lemma}
\newtheorem{proposition}[theorem]{Proposition}
\newtheorem{example}[theorem]{Example}
\newtheorem{remarks}[theorem]{Remarks}
\newtheorem{remark}[theorem]{Remark}
\newtheorem{definition}[theorem]{Definition}
\numberwithin{equation}{section}
\title{\bf  Remarks on Li-Yau inequality on graphs}
\author{Bin Qian\thanks{Department of Mathematics, Changshu Institute of Technology, Changshu, Jiangsu 215500,  China.
E-mail: binqiancs@yahoo.com, binqiancn@gmail.com.
 Partially supported by the NSF of China (Grant NO. 11201040)
 }}\date{
 }
\begin{document}
\maketitle

\begin{abstract}
In this paper, we study  Li-Yau gradient estimates for the solutions $u$ to the heat equation $\partial_tu=\Delta u$ on graphs under the curvature condition $CD(n,-K)$ introduced by Bauer et al. in \cite{BHLLMY}. As applications, we derive Harnack inequalities and heat kernel estimates on graphs. Also we present a type of Hamilton gradient estimates.

\end{abstract}

\noindent \hskip 24pt{\bf Keywords:} Li-Yau inequality, Heat equation, Curvature, Heat kernel

\noindent \hskip 24pt {\bf AMS Classification Subjects 2010:} 05C81 53C21 35K08

\section{Introduction}
In their celebrated work, Li and Yau \cite{LY} proved an upper bound on the gradient of positive solutions to the heat equation, called Li-Yau inequality.  This inequality is a very powerful tool to study estimation of heat kernels. More precisely, in its simplest form, it asserts that, for an n-dimensional compact Riemannian manifold with non-negative curvature, if $u$ is a positive solution to the heat equation $\partial_tu=\Delta u$, then
\begin{equation*}\label{LY}
\frac{|\nabla u|^2}{u^2}-\frac{u_t}{u}\le \frac{n}{2t}.
\end{equation*}

Many generalizations of this inequality have been developed, see \cite{Da, BQ99,LX11,Li1,Qian,BBBQ} and references therein.

Recently, Bauer et al \cite{BHLLMY} prove a discrete version of Li-Yau inequality on graphs via introducing a new notion of curvature, a type of  chain rule formula  for graph and a discrete version of maximum principle. More precisely,

{\bf Theorem }(Due to \cite{BHLLMY}) Let $G=(V,E)$ be a finite graph satisfying $CDE(n,-K)$ with $K\ge0$, and let $u $ be a positive solution to the heat equation on $G$. Then  for fixed $0<\alpha<1$, we have  for all $t>0$,
$$
\frac{(1-\alpha)\Gamma(\sqrt{u})}{u}-\frac{\frac{\partial}{\partial t}(\sqrt{u})}{\sqrt{u}}\le \frac{n}{2(1-\alpha)t}+\frac{Kn}{\alpha}.
$$
In particular, if $K=0$, we can take $\alpha=0$.

Meanwhile, for the Laplacian on  the manifolds with negative curvature, the parameter $\alpha$ can be replaced by some function of the time $t$, for example,

{\bf Theorem} (Due to \cite{BQ99,LX11}) Let $M$ be a complete Riemannian manifold with dimension $n$. Assume $Ricci(M)\ge -K$ with $K\ge0$. For any solution $u$ to the heat equation $\partial_tu=\Delta u$, we have for $t>0$,
$$
|\nabla \log u|^2-\left(1+\frac{2}{3}Kt\right)(\log u)_t\le \frac{n}{2t}+\frac{nK}{2}\left(1+\frac{1}{3}Kt\right),
$$
and 
$$
|\nabla \log u|^2-\left(1+\frac{\sinh(Kt)\cosh(Kt)-Kt}{\sinh^2(Kt)}\right)(\log u)_t\le \frac{nK}{2}\left(1+\coth(Kt)\right).
$$
As a consequence, new bounds with explicit constants for the associated heat kernels can be derived, see \cite{Ham,BQ99,LX11,Qian}. So it is nature to ask that in the setting of graph, whether  a similar result as above holds in the setting of graphs. This is the starting point of this paper.

Let $G=(V,E)$ be a graph. Given a finite measure $\mu:V\to \mathbb{R}$ on $V$, the $\mu$-Laplacian on  $G$ is the operator $\Delta: \mathbb{R}^{|V|}\to \mathbb{R}^{|V|}$ defined by $$
\Delta f(x)=\frac{1}{\mu(x)}\sum_{y\sim x}\omega_{x y}(f(y)-f(x)).
$$

To introduce the new notion of curvature in \cite{BHLLMY},  we recall the $\Gamma$ (or $\mu$-$\Gamma$) gradient operator, which is defined as, see \cite{Ba97} etc,
\begin{align*}
2\Gamma(f,g)&=2\langle\nabla f, \nabla g\rangle (x)=\left(\Delta(fg)-f\Delta g-g\Delta f \right)(x)\\
&=\frac{1}{\mu(x)}\sum_{y\sim x}\omega_{xy}\left(f(y)-f(x)\right)\left(g(y)-g(x)\right).
\end{align*}
We denote $\Gamma(f)=\Gamma(f,f)=|\nabla f|^2$ for short. For any positive $f$, we have
\begin{equation}\label{id12}
\Delta (\sqrt{f})=\frac{\Delta f}{2\sqrt{f}}-\frac{\Gamma({\sqrt{f}})}{\sqrt{f}},
\end{equation}
this identity will play an important role in the following, refer to \cite{BHLLMY} in detail. Now let us recall the new curvature on graph introduced in \cite{BHLLMY}:
\begin{definition}\label{cde}
We say that a graph $G$ satisfies the exponential curvature dimension inequality $CDE(n,K)$ if for vertex $x\in V$ and any positive function $f:V\to \mathbb{R}$ such that $\Delta f(x)<0$ we have
$$
\Gamma_2(f)-\Gamma\left(f,\frac{\Gamma(f)}{f}\right)\ge \frac1n(\Delta f)^2+K\Gamma(f),
$$
or equivalently
$$
\widetilde{\Gamma}_2(f):=\frac12\Delta\Gamma(f)-\Gamma\left(f,\frac{\Delta(f^2)}{2f}\right)\ge\frac1n(\Delta f)^2+K\Gamma(f).
$$
\end{definition}
In the case of diffusion operators on the Riemanian manifold, the curvature condition dimension condition $CD(K,n)$ (refer to \cite{Ba97} for its definition) implies $CDE(n,K)$, see \cite{BHLLMY}.

Consider the following heat equation on graph $G=(V,E)$
\begin{equation}\label{heat}
\partial_tu(x,t)=\Delta u(x,t), (x,t)\in V\times (0,\infty)
\end{equation}
with the initial data $u(\cdot,0)=u_0$. The solution $u$ can be written as $u(x,t)=P_tu_0$ where $P_t=e^{t\Delta}$ is the heat semigroup and $u_0=u(\cdot,0)$. We shall study the Li-Yau gradient estimates  and Hamilton gradient estimates of any positive solution $u$, see section \ref{sec-LY} and section \ref{sec-Ham}. In section \ref{sec-mp}, we present some maximum principles on graph, which will be used to get the global gradient estimates. We state the Harnack inequalities and heat kernel estimates in section \ref{sec-Harn}.

{\bf Notations:} For $\mu-$Laplacian $\Delta$, assume $\omega_{min}:=\inf_{\omega\in E}\omega_{e}>0$, $D_{\omega}:=\max_{\stackrel{x,y\in V}{x\sim y}}\frac{\deg(x)}{\omega_{xy}}<\infty$, $D_{\mu}:=\max_{x\in V}\frac{\deg(x)}{\mu(x)}<\infty$. For any positive solution $u(x,t)$ to the heat equation \eqref{heat} and $T>0$, denote by $S_T$  the set $\{(x,t)\in V\times (0,T]|\Delta\sqrt{u}(x,t)<0\}$.

\section{Auxiliary Propositions}\label{sec-mp}
In this section, we give some auxiliary propositions of their only interests, which can be seen as conditional maximum principles in discrete settings. They will be used  heavily in the following sections. The proofs is similar to the one in the  setting of manifolds, see for example \cite{Gri}.

\begin{lemma}[Strong maximum primciple]\label{mp1}
Let $G=(V,E)$ be a finite graph, $u$ is a positive solution to the heat equation \eqref{heat}, the set $S_T$ is defined as above. For some function $F$  satisfying
\begin{equation}\label{diff-0}(\Delta-\partial_t)F(x,t)>0, \ \ \forall\  (x,t)\in S_T, \end{equation}
with $F(x,0)\le 0, \ \forall\ x\in V$, assume $F(x,t)\le0$ holds for $(x,t)\in S_T^c$ and $t>0$. Then for any $(x,t)\in V\times [0,T]$, we have  $F(x,t)\le0$.
\end{lemma}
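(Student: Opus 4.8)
The plan is to argue by contradiction, exploiting that a finite graph together with a compact time interval makes the global maximum of $F$ genuinely attainable, so that a hypothetical positive maximum must sit at a point where both the discrete Laplacian and the time derivative can be signed. Suppose, contrary to the claim, that $M := \max_{V\times[0,T]} F > 0$. This maximum is attained at some $(x_0,t_0)$, because $V$ is finite and $t\mapsto F(x,t)$ is continuous on the compact interval $[0,T]$ for each fixed $x$ (indeed $g(t):=\max_{x} F(x,t)$ is continuous, being a maximum of finitely many continuous functions). Since $F(x,0)\le 0 < M$ for every $x$, we must have $t_0 > 0$. To pin down the time derivative cleanly, I would take $t_0$ to be the smallest time at which the value $M$ is attained; since $\{t: g(t)=M\}$ is closed, this minimum exists and satisfies $F(x,t) < M$ for all $x$ and all $t < t_0$.

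Next I would split into two cases according to whether $(x_0,t_0)\in S_T$ or $(x_0,t_0)\in S_T^c$. If $(x_0,t_0)\in S_T^c$, then since $t_0 > 0$ the hypothesis directly gives $F(x_0,t_0)\le 0$, contradicting $F(x_0,t_0)=M>0$. The substantive case is $(x_0,t_0)\in S_T$, where the conditional inequality $(\Delta-\partial_t)F(x_0,t_0) > 0$ is available to be contradicted.

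At the maximizer the discrete Laplacian is automatically nonpositive: from $\Delta F(x_0,t_0)=\frac{1}{\mu(x_0)}\sum_{y\sim x_0}\omega_{x_0 y}\bigl(F(y,t_0)-F(x_0,t_0)\bigr)$, every difference $F(y,t_0)-F(x_0,t_0)\le 0$ because $F(x_0,t_0)=M$ is the global maximum, so $\Delta F(x_0,t_0)\le 0$. For the time derivative, maximality of $F(x_0,\cdot)$ at $t_0$ together with $F(x_0,t)<M$ for $t<t_0$ forces the left difference quotients $\frac{F(x_0,t_0)-F(x_0,t)}{t_0-t}$ to be positive, hence $\partial_t F(x_0,t_0)\ge 0$; this handles the interior case $t_0<T$ and the right-endpoint case $t_0=T$ uniformly. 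Combining the two signs yields $(\Delta-\partial_t)F(x_0,t_0)\le 0$, contradicting the strict inequality on $S_T$. Therefore $M\le 0$, i.e. $F\le 0$ on all of $V\times[0,T]$.

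I expect the only delicate point to be the treatment of the time derivative at the maximizer: one must ensure the argument does not covertly assume $t_0$ is an interior maximum, since the maximum may well occur at the endpoint $t_0=T$. Choosing $t_0$ minimal and invoking only the one-sided (left) derivative inequality $\partial_t F(x_0,t_0)\ge 0$ circumvents this cleanly, and this is precisely where the finiteness of $V$ earns its keep, since the spatial side requires no regularity beyond the elementary sign computation above.
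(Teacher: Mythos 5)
Your proof is correct and follows essentially the same route as the paper: locate a global maximizer of $F$ over $V\times[0,T]$, dispose of the $S_T^c$ and $t=0$ cases by hypothesis, and contradict the strict inequality via $\Delta F(x_0,t_0)\le 0$ and $\partial_t F(x_0,t_0)\ge 0$. Your use of the minimal maximizing time and the one-sided left difference quotient is a minor (and clean) refinement of the paper's case split $\partial_t F=0$ for $t^*\in(0,T)$ versus $\partial_t F\ge 0$ at $t^*=T$, but the argument is the same in substance.
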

\begin{proof}
For some $t^*>0$, assume $(x^*,t^*)$ is the place where $F$ attains its maximum in the $V\times [0,T]$ domain. We may assume $(x^*,t^*)\in S_T$ otherwise there is nothing to prove. It follows
$\partial_tF(x^*,t^*)=0$ if $t^*\in (0,T)$ and $\partial_tF(x)\ge0$ if $t^*=T$. Meanwhile
$$
\  \Delta u(x^*,t^*)=\frac{1}{\mu(x)}\sum_{y\sim x^*}w_{x^*y}(u(y,t^*)-u(x^*,t^*))\le 0,
$$
 thus $(\Delta-\partial_t)u(x^*,t^*)\le 0$. The assumption \eqref{diff-0} yields a contradiction.  Hence $t^*=0$, the desired result follows from the assumption that $F(x,0)\le0$ for all $x\in V$.
\end{proof}

Now let us state the weak maximum principle as follows:

\begin{proposition}\label{mp2} The conclusion in the above Lemma \ref{mp1} holds if we replace \eqref{diff-0} by
\begin{equation}\label{diff-1}(\Delta-\partial_t)F(x,t)\ge0, \ \ \forall\  (x,t)\in S_T.\end{equation}

\end{proposition}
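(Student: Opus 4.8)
The plan is to reduce the weak (non-strict) case to the strong maximum principle of Lemma~\ref{mp1} by a standard vanishing-perturbation argument. Since Lemma~\ref{mp1} already disposes of the strict inequality \eqref{diff-0}, I would introduce a time-linear perturbation of $F$ that upgrades the hypothesis \eqref{diff-1} to a strict differential inequality while leaving all the sign conditions on the parabolic boundary intact.

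Concretely, for $\varepsilon>0$ I would set $F_\varepsilon(x,t):=F(x,t)-\varepsilon t$. The key observation is that the spatial Laplacian annihilates any function of $t$ alone, so that $\Delta(\varepsilon t)=0$, whereas $\partial_t(\varepsilon t)=\varepsilon$; hence
$$(\Delta-\partial_t)F_\varepsilon=(\Delta-\partial_t)F+\varepsilon\ge \varepsilon>0 \quad \text{on } S_T,$$
where I have used \eqref{diff-1}. It is important here that the set $S_T$ is defined solely through $u$ (via the condition $\Delta\sqrt{u}<0$) and is therefore unchanged when $F$ is replaced by $F_\varepsilon$; the domain on which the differential inequality is required stays the same.

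Next I would verify that $F_\varepsilon$ still satisfies the remaining hypotheses of Lemma~\ref{mp1}. At the initial time $F_\varepsilon(x,0)=F(x,0)\le 0$, and on $S_T^c$ with $t>0$ we have $F_\varepsilon(x,t)=F(x,t)-\varepsilon t\le F(x,t)\le 0$, since subtracting the nonnegative quantity $\varepsilon t$ only decreases the function. Thus $F_\varepsilon$ meets all the assumptions of the strong maximum principle, and Lemma~\ref{mp1} applies to give $F_\varepsilon(x,t)\le 0$, that is $F(x,t)\le \varepsilon t\le \varepsilon T$, for every $(x,t)\in V\times[0,T]$.

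Finally I would let $\varepsilon\to 0$ to conclude $F(x,t)\le 0$ on all of $V\times[0,T]$. I do not expect any genuine obstacle here: the argument is elementary once one notices that the perturbation $-\varepsilon t$ is harmless on the boundary data yet strictly helps the interior inequality. The only two points that require a moment's care are precisely those highlighted above, namely that $\Delta$ kills the purely temporal term $\varepsilon t$, and that $S_T$ is intrinsic to $u$ and hence insensitive to the modification of $F$.
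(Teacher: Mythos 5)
Your proposal is correct and coincides with the paper's own proof: both perturb $F$ to $F_\varepsilon = F - \varepsilon t$, note that $(\Delta-\partial_t)F_\varepsilon \ge \varepsilon > 0$ on $S_T$ while the boundary sign conditions are preserved, invoke Lemma \ref{mp1}, and let $\varepsilon \to 0$. Your extra remarks (that $\Delta$ annihilates functions of $t$ alone and that $S_T$ depends only on $u$, not on $F$) are sound and merely make explicit what the paper leaves implicit.
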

\begin{proof} For any $\varepsilon>0$,  denote $F_{\varepsilon}=F(x,t)-\varepsilon t$, we have, for $(x,t)\in S_T$, \begin{equation*}
(\Delta-\partial_t)F_{\varepsilon}=(\Delta-\partial_t)F+\varepsilon\ge \varepsilon>0,
\end{equation*}
and $F_{\varepsilon}(x,t)\le0$ holds for $(x,t)=(x,0)$ with $x\in V$ or $(x,t)\in S_T^c$ and $t>0$. Hence by Lemma \ref{mp1}, we have $F_{\varepsilon}(x,t)\le 0$ for any $(x,t)\in V\times [0,T]$. Letting $\varepsilon\to 0$, we complete the proof.
\end{proof}

Let us extend the above results to the case of infinite graph.
\begin{proposition}\label{mp3}
Let $G=(V,E)$ be a infinite graph, $u$ is some positive solution to the heat equation \eqref{heat}, the set $S_T$ is defined as above. For some function $F$  satisfying
\begin{equation}\label{diff-2}(\Delta-\partial_t)F(x,t)\ge0, \ \ \forall\  (x,t)\in S_T, \end{equation}
with $F(x,0)\le 0, \ \forall\ x\in V$, assume $F(x,t)\le0$ holds for $(x,t)\in S_T^c$ and $t>0$. Then for any $(x,t)\in V\times [0,T]$, we have  $F(x,t)\le0$.

\end{proposition}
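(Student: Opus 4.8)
The plan is to reduce the statement to the finite-graph argument of Proposition \ref{mp2} by inserting a spatial weight that forces the relevant maximum to be attained at a single vertex, since the only place the finite proof uses compactness is the existence of a maximizer. Throughout I will use that $F$ is bounded above on $V\times[0,T]$ (as holds for all the Li-Yau type quantities treated later in the paper) together with the bounded-geometry hypotheses $\omega_{min}>0$, $D_\omega<\infty$, $D_\mu<\infty$, which guarantee that $\Delta$ does not blow up.

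First I would fix a base vertex $o\in V$ and construct a weight $\varphi:V\to[0,\infty)$ with $\varphi(x)\to\infty$ as $d(x,o)\to\infty$ and with $\Delta\varphi$ bounded above. The graph distance $\varphi(x)=d(x,o)$ already serves: since $|d(y,o)-d(x,o)|\le 1$ whenever $y\sim x$, the average defining $\Delta\varphi(x)$ has absolute value at most $\frac{1}{\mu(x)}\sum_{y\sim x}\omega_{xy}$, which is controlled by $D_\mu$. Write $C:=\sup_{x}\Delta\varphi(x)<\infty$. Then, for $\varepsilon>0$ and $0<\delta<\varepsilon/(1+C)$, I consider the perturbed function
$$\tilde F(x,t)=F(x,t)-\varepsilon t-\delta\varphi(x).$$
Because $F$ is bounded above while $\varphi\to\infty$, we have $\tilde F(x,t)\to-\infty$ as $d(x,o)\to\infty$ uniformly in $t\in[0,T]$, so $\tilde F$ attains its maximum over $V\times[0,T]$ at some point $(x^*,t^*)$ with $x^*$ ranging over a finite set.

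Next I would run the dichotomy of Lemma \ref{mp1} at $(x^*,t^*)$. If $t^*=0$, or if $(x^*,t^*)\in S_T^c$, then $\tilde F(x^*,t^*)\le F(x^*,t^*)\le 0$ and we are finished. Otherwise $(x^*,t^*)\in S_T$ with $t^*>0$: spatial maximality gives $\Delta\tilde F(x^*,t^*)\le 0$ (each neighbor contributes a nonpositive difference to the averaged Laplacian), while maximality in time gives $\partial_t\tilde F(x^*,t^*)\ge 0$, so $(\Delta-\partial_t)\tilde F(x^*,t^*)\le 0$. On the other hand, since $(x^*,t^*)\in S_T$, hypothesis \eqref{diff-2} yields
$$(\Delta-\partial_t)\tilde F(x^*,t^*)=(\Delta-\partial_t)F(x^*,t^*)+\varepsilon-\delta\Delta\varphi(x^*)\ge \varepsilon-\delta C>0,$$
a contradiction. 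Hence in every case $\sup_{V\times[0,T]}\tilde F\le 0$, that is $F(x,t)\le \varepsilon t+\delta\varphi(x)$ for all $(x,t)$; letting $\delta\to0$ and then $\varepsilon\to0$ gives $F\le 0$.

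The main obstacle is exactly the loss of compactness: on an infinite graph $F$ need not attain its supremum, so the bare argument of Lemma \ref{mp1} cannot even begin. The weight $\varphi$ repairs this, but only because bounded geometry lets us pick $\varphi$ unbounded yet with $\Delta\varphi$ bounded above; the delicate point is to balance $\delta$ against $\varepsilon$ so that the penalty $\delta\Delta\varphi$ can never overwhelm the strict gain $\varepsilon$ produced by the time perturbation. An equivalent route would be to exhaust $V$ by finite subgraphs and control the boundary contributions with the same weight, but a single global weight seems cleanest here.
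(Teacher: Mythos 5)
Your proof is correct, but it takes a genuinely different route from the paper's. The paper keeps only the time perturbation $F_\varepsilon=F-\varepsilon t$ and handles the loss of compactness with an Omori--Yau style sequence of approximate maximizers: it picks a maximal time $t^*$ and vertices $x_k^*$ with $F_\varepsilon(x_k^*,t^*)\ge\sup_y F_\varepsilon(y,t^*)-\frac1k$, derives $\Delta F_\varepsilon(x_k^*,t^*)\le\frac{D_\mu}{k}$ from near-maximality (this is where $D_\mu<\infty$ enters), and contradicts $(\Delta-\partial_t)F_\varepsilon\ge\varepsilon$ for large $k$. You instead add a spatial penalization $\delta\,d(\cdot,o)$, which restores an exact maximizer and reduces everything to the finite-graph dichotomy of Lemma \ref{mp1}; here $D_\mu<\infty$ enters through the bound $\Delta d(\cdot,o)\le D_\mu$, and the balancing $\delta(1+C)<\varepsilon$ is exactly right. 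What your approach buys is rigor and transparency: the paper's choice of $t^*$ with $F_\varepsilon(y,t)\le F_\varepsilon(y,t^*)$ simultaneously for all $y$ is not justified as stated (the sup in space need not be attained, and $\sup_y F_\varepsilon(y,\cdot)$ need not attain its maximum in $t$ without an approximation argument), whereas your maximum genuinely exists. What it costs is two hypotheses you correctly flag or should flag: $F$ must be bounded above on $V\times[0,T]$ (note the paper tacitly needs this too, since its $\frac1k$-maximizing sequence requires $\sup_y F_\varepsilon(y,t^*)<\infty$; it holds in the paper's applications, where $F=uH$ with $u$ a bounded positive solution), and the penalization needs balls $\{d(\cdot,o)\le R\}$ to be finite, i.e.\ local finiteness, plus connectedness (or an argument per connected component) so that $d(\cdot,o)$ is finite and tends to infinity. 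Also take $\delta<\varepsilon/(1+\max(C,0))$ to cover the harmless case $C<0$. With these standing assumptions, which are implicit throughout the paper, your argument is a valid and arguably cleaner substitute for the paper's proof.
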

\begin{proof}
For any $\varepsilon>0$,  denote $F_{\varepsilon}=F(x,t)-\varepsilon t$, we have, for $(x,t)\in S_T$, \begin{equation}\label{diff-3}
(\Delta-\partial_t)F_{\varepsilon}(x,t)=(\Delta-\partial_t)F+\varepsilon\ge \varepsilon,
\end{equation}
and $F_{\varepsilon}(x,t)\le0$ holds for $(x,t)=(x,0)$ with $x\in V$ or $(x,t)\in S_T^c$ and $t>0$. For some $t^*>0$, we can find a sequence $\{x_k^*\}_{k\in \mathbb{N}}$ such that $$F_{\varepsilon}(x_k^*,t^*)\ge F_{\varepsilon}(y,t^*)-\frac{1}{k}, \ \forall y\in V,\ \ \mbox{and} \ \   F_{\varepsilon}(y,t)\le F_{\varepsilon}(y,t^*),\  \forall  y\in V,t\in [0,T].$$

If there is a subsequence $\{x_{n_k}^*\}_{k\in \mathbb{N}}$ of $\{x_k^*\}_{k\in \mathbb{N}}$  such that for any $k\in \mathbb{N}$,  $(x_{n_k}^*,t^*)\in S_T^c$. Then for any $y\in V, t\in [0,T]$, $F_{\varepsilon}(y,t)\le F_{\varepsilon}(y,t^*)\le F_{\varepsilon}(x_{n_k}^*,t^*)+\frac{1}{n_k}\le\frac{1}{n_k}$, letting $k\to\infty$ then ${\varepsilon\to 0}$, the desired result follows. Hence we can assume for all $k\in\mathbb{N}$, $(x_k,t^*)\in S_T$, in this case, we have
$$
\Delta F_{\varepsilon}(x_k^*,t^*)=\frac{1}{\mu(x_k^*)}\sum_{y\sim x_k^*}\omega_{x_k^*y}\left(F_{\varepsilon}(y,t^*)-F_{\varepsilon}(x_k^*,t^*)\right)\le\ \frac{D_{\mu}}{k},\ \partial_tF_{\varepsilon}(x_k^*,t^*)\ge 0.
$$
Thus $(\Delta-\partial_t)F_{\varepsilon}(x_k^*,t^*)\le \frac{D_{\mu}}{k}$, it follows $(\Delta-\partial_t)F_{\varepsilon}(x_k^*,t^*)\le \frac{\varepsilon}{2}<\varepsilon$ for $k$ large enough, which is contradictory to \eqref{diff-3}. So it should be $t^*=0$. It follows $ F_{\varepsilon}(y,t)\le F_{\varepsilon}(y,0)$ holds for all  $y\in V,t\in [0,T]$. Letting $\varepsilon\to0$, we complete the proof.                   \end{proof}

\begin{remarks}\label{rem1}
(a). The set $S_T$ can be replaced by any other set.

(b). The above three Propositions hold if we replace $\Delta$ by $\Delta-q$, where the potential function $q$ is nonnegative.
\end{remarks}


\section{Li-Yau gradient estimates}\label{sec-LY}
To state the main results in this section, let us introducte some notations. For a given $C^1$ positive function
 $a(t):(0,\infty)\to (0,\infty)$, we always suppose $a(t)$  satisfies the following assumptions:\\
 (A1). For all $t>0$, $a(t)>0,a'(t)>0$ and $\lim_{t\to0}a(t)=0$, $\lim\limits_{t\to0}\frac{a(t)}{a'(t)}=0$. \\
 (A2). For any $L>0$,
 $\frac{a'^2}{a}$ is  continuous and integrable on the interval $[0,L]$.

\subsection{Global estimates}

 The main result in this subsection is the following global Li-Yau gradient estimate:
\begin{theorem}\label{LY1} Let $G=(V,E)$ be a finite (or infinite) graph satisfying the curvature dimension condition $CDE(n,-K)$ with $K\ge0$, and let $u$ be a positive solution to the heat equation \eqref{heat} on $G$. Then for all $t>0$,
\begin{equation}\label{LY-1}
\frac{\Gamma(\sqrt{u})}{u}-\alpha(t)\frac{\partial_t (\sqrt{u})}{\sqrt{u}}\le \frac{1}{2}\varphi(t),
\end{equation}
where \begin{equation}\label{notation1}
\alpha(t)=\frac{2K}{a(t)}\int_0^ta(s)ds+1, \
\varphi(t)=nK+\frac{nK^2}{a(t)}\int_0^ta(s)ds+\frac{n}{4a(t)}\int_0^t\frac{a'^2(s)}{a(s)}ds.
 \end{equation}
\end{theorem}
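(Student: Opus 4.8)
The plan is to deduce the estimate from the weak maximum principle (Proposition \ref{mp2} for finite graphs, Proposition \ref{mp3} for infinite ones). Write $v=\sqrt u$. Since $\partial_t u=\Delta u$, the identity \eqref{id12} gives $\partial_t v=\frac{\partial_t u}{2\sqrt u}=\frac{\Delta u}{2\sqrt u}=\Delta v+\frac{\Gamma(v)}{v}$, and hence
\[
\frac{\Gamma(\sqrt u)}{u}-\alpha(t)\frac{\partial_t\sqrt u}{\sqrt u}=(1-\alpha)\frac{\Gamma(v)}{v^2}-\alpha\frac{\Delta v}{v}=:H(x,t).
\]
I would then introduce the auxiliary function $\mathcal F(x,t)=a(t)\bigl(H(x,t)-\tfrac12\varphi(t)\bigr)$ and aim to show $\mathcal F\le 0$ on $V\times(0,T]$, which is precisely \eqref{LY-1}. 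By the maximum principles it suffices to verify three things: $(\Delta-\partial_t)\mathcal F\ge 0$ on $S_T$, $\mathcal F\le 0$ on $S_T^c$, and $\mathcal F(\cdot,0^+)\le0$.

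The last two are the easy ones. On $S_T^c$ we have $\Delta v\ge0$, and since $\alpha(t)\ge1$ and $\varphi(t)\ge0$ (here $K\ge0$), every term of $\mathcal F=a\bigl((1-\alpha)\frac{\Gamma(v)}{v^2}-\alpha\frac{\Delta v}{v}-\tfrac12\varphi\bigr)$ is nonpositive, so $\mathcal F\le0$. For the initial condition, writing $\alpha-1=\frac{2K}{a}\int_0^t a\,ds$ and $a\varphi=nKa+nK^2\int_0^ta\,ds+\frac n4\int_0^t\frac{a'^2}{a}\,ds$, assumption (A1) forces $a(t)\to0$ and $a(\alpha-1)=2K\int_0^t a\,ds\to0$, while (A2) forces $\int_0^t\frac{a'^2}{a}\,ds\to0$; hence $aH\to0$ and $a\varphi\to0$, so $\mathcal F(\cdot,0^+)\le0$ (on infinite graphs one invokes the standing assumptions $\omega_{\min}>0,\ D_\mu,D_\omega<\infty$ to keep $\frac{\Gamma(v)}{v^2}$ and $\frac{\Delta v}{v}$ under control near $t=0$).

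The heart of the matter is the differential inequality on $S_T$. Since $a,\alpha,\varphi$ depend only on $t$, one has $(\Delta-\partial_t)\mathcal F=a\,(\Delta-\partial_t)H-a'H+\tfrac12(a\varphi)'$. Following the computations of \cite{BHLLMY}, I would expand $(\Delta-\partial_t)H$ via \eqref{id12}; the discrete ``transport'' terms are exactly those absorbed into $\widetilde{\Gamma}_2(v)$, and the condition $CDE(n,-K)$ — applicable because $\Delta v<0$ on $S_T$ — yields, pointwise,
\[
(\Delta-\partial_t)H\ \ge\ \frac2n\Bigl(\frac{\Delta v}{v}\Bigr)^2-2K\frac{\Gamma(v)}{v^2}+\alpha'\,\frac{\partial_t v}{v}.
\]
Substituting and collecting terms in the quantities $Y:=\frac{\Gamma(v)}{v^2}\ge0$ and $X:=\frac{\Delta v}{v}<0$ (note $\frac{\partial_t v}{v}=X+Y$), the coefficient of $Y$ equals $-2aK+a\alpha'+a'(\alpha-1)$, which vanishes exactly when $(a(\alpha-1))'=2Ka$; the surviving expression $\frac{2a}{n}X^2+(a\alpha)'X+\tfrac12(a\varphi)'$ is then a nonnegative quadratic in $X$ precisely when $(a\varphi)'\ge\frac{n}{4a}\bigl((a\alpha)'\bigr)^2=\frac{na}{4}\bigl(\frac{a'}{a}+2K\bigr)^2$. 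Solving these two first order ODEs with the vanishing initial data dictated by (A1)--(A2) reproduces exactly the stated formulas for $\alpha(t)$ and $\varphi(t)$; with equality the quadratic has minimum value $0$, so $(\Delta-\partial_t)\mathcal F\ge0$ on $S_T$, and Proposition \ref{mp2} (resp. \ref{mp3}) finishes the proof.

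I expect the main obstacle to be the pointwise lower bound for $(\Delta-\partial_t)H$. On a manifold the analogous transport terms are discarded by using $\nabla H=0$ at an interior spatial maximum, but the discrete maximum principle of Section \ref{sec-mp} demands the differential inequality at \emph{every} point of $S_T$, not merely at the maximum; the role of the exponential curvature condition $CDE(n,-K)$ (and of working with $\sqrt u$ rather than $\log u$) is exactly to make the bound hold pointwise with the sharp constant $\frac1n(\Delta v)^2$. Once that inequality is secured, the completion of the square and the matching of the residual with the ODEs for $\alpha$ and $\varphi$ are routine, as is the consistency check that for $K=0$ one recovers the bound $\tfrac12\varphi(t)=\tfrac{n}{2t}$ upon choosing, e.g., $a(t)=t^2$.
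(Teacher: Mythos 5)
Your skeleton matches the paper's: the same auxiliary function (your $\mathcal{F}$ is, up to a factor $2$, the paper's $H$ multiplied out), the same three-case verification, and the same coefficient matching — your conditions $(a(\alpha-1))'=2Ka$ and $(a\varphi)'=\frac{n}{4a}\bigl((a\alpha)'\bigr)^2=\frac{a}{n}\bigl(\frac{na'}{2a}+nK\bigr)^2$ are exactly the paper's \eqref{diff2}, (\ref{diff2}$'$), (\ref{diff2}$''$) with $\eta=\frac{na'}{2a}+nK$, and your cases on $S_T^c$ and at $t=0^+$ are the paper's cases (a) and (c). But there is a genuine gap at the central step: you apply the maximum principle to $\mathcal{F}$ itself, which requires the pointwise bound $(\Delta-\partial_t)H\ge \frac2n X^2-2KY+\alpha'(X+Y)$ on $S_T$ (in your notation $v=\sqrt u$, $X=\frac{\Delta v}{v}$, $Y=\frac{\Gamma(v)}{v^2}$), and $CDE(n,-K)$ does not deliver this. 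The curvature condition bounds $\widetilde{\Gamma}_2(\sqrt u)$ from below, and on a graph there is no chain rule with which to expand $(\Delta-\partial_t)$ applied to the quotients $\frac{\Gamma(v)}{u}$ and $\frac{\Delta v}{v}$ occurring in $H$: terms such as $\Delta\left(\frac1u\right)$ and $\Gamma\left(\Gamma(v),\frac1u\right)$ appear and do not reorganize into $\widetilde{\Gamma}_2(v)$. Equivalently, since $u$ solves the heat equation,
\begin{equation*}
(\Delta-\partial_t)(u\mathcal{F})=u(\Delta-\partial_t)\mathcal{F}+2\Gamma(u,\mathcal{F}),
\end{equation*}
and what the curvature condition actually controls is the left-hand side, not $u(\Delta-\partial_t)\mathcal{F}$; the discrepancy $\frac2u\Gamma(u,\mathcal{F})$ has no sign. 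On a manifold it is killed by $\nabla H=0$ at an interior maximum, but — as you yourself observe — the discrete maximum principle of Section \ref{sec-mp} demands the inequality at \emph{every} point of $S_T$, and your assertion that $CDE(n,-K)$ "makes the bound hold pointwise" for $H$ is precisely the unproved claim.

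The repair is exactly the paper's device, and it is small: run your argument on $u\mathcal{F}$ instead of $\mathcal{F}$. The key point is that $uH=a\bigl(2\Gamma(\sqrt u)-\alpha\Delta u-\varphi u\bigr)$ is a \emph{linear} combination of $\Gamma(\sqrt u)$, $\Delta u$ and $u$ with time-dependent coefficients, so $\Delta(uH)$ expands with no cross terms; this is the content of Lemma \ref{lem1}, which gives $(\Delta-\partial_t)(uH)=4a\widetilde{\Gamma}_2(\sqrt u)+(a\alpha)'\Delta u-2a'\Gamma(\sqrt u)+(a\varphi)'u$ exactly, with $\widetilde{\Gamma}_2$ emerging from the combination $\frac12\Delta\Gamma(\sqrt u)-\Gamma\bigl(\sqrt u,\frac{\Delta u}{2\sqrt u}\bigr)$. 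Applying $CDE(n,-K)$ there (legitimate on $S_T$ since $\Delta\sqrt u<0$) and your own completion of the square — which goes through verbatim, since $\Delta u-2\Gamma(\sqrt u)=2\sqrt u\,\Delta\sqrt u$ turns the paper's square into your $X^2$ — yields $(\Delta-\partial_t)(u\mathcal{F})\ge0$ on $S_T$; your boundary checks transfer to $u\mathcal{F}$ because $u>0$; Proposition \ref{mp2} (resp.\ \ref{mp3}) then gives $u\mathcal{F}\le0$ on $V\times[0,T]$, and positivity of $u$ yields $\mathcal{F}\le0$, i.e.\ \eqref{LY-1}. With this single correction your proof coincides with the paper's.
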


\begin{remarks} (a). In the setting of Laplacian on Riemannian manifolds, a similar result has been obtained in \cite{Qian}, which generalized the work of Li-Yau \cite{LY}, Li-Xu \cite{LX11} and etc. See also \cite{Li1} for diffusion operators on Riemannian manifolds and \cite{BBBQ,Qian1} for subelliptic operators.

(b). For the case of Schr\"odinger operators $\Delta-q$ with nonnegative potential $q$, assume some additional condition on $q$, a similar result of Theorem \ref{LY1} holds.
\end{remarks}

Let us give some examples, which are similar to \cite{Qian}. But the under space in \cite{Qian} is Riemannian manifolds.
\begin{example}
\begin{description}
\item{(1).} Taking $a(t)=t^{\gamma}$ with $\gamma>1$, \eqref{LY-1} reduces to
\begin{equation}\label{ex1}
\frac{\Gamma(\sqrt{u})}{u}-\left(1+\frac{2Kt}{1+\gamma}\right)\frac{\partial_t (\sqrt{u})}{\sqrt{u}}\le \frac{nK}{2}+\frac{nK^2t}{2(1+\gamma)}+\frac{n\gamma^2}{8(\gamma-1)t}.
\end{equation}
In particular, choose $\gamma=2$, we have
\begin{equation}\label{ex1-1}
\frac{\Gamma(\sqrt{u})}{u}-\left(1+\frac{2Kt}{3}\right)\frac{\partial_t (\sqrt{u})}{\sqrt{u}}\le \frac{nK}{2}+\frac{nK^2t}{6}+\frac{n\gamma^2}{8t}.
\end{equation}

Furthermore, if the graph $G$ satisfies the curvature dimension inequality $CDE(n,0)$, we have
$$
\frac{\Gamma(\sqrt{u})}{u}-\frac{\partial_t (\sqrt{u})}{\sqrt{u}}\le\frac{n}{2t}.
$$
It has been observed in \cite{BHLLMY}, Theorem 4.3.
\end{description}

\item{(2).} Taking $a(t)=t^2+\gamma t^3$ with $\gamma\ge0$, \eqref{LY-1} reduces to
$$
\frac{\Gamma(\sqrt{u})}{u}-\left(1+\frac23Kt-\frac{\gamma Kt^2}{6(1+\gamma
t)}\right)\frac{\partial_t(\sqrt{u})}{\sqrt{u}}\le \frac{nK}{2}+\frac{nK^2(4t+3\gamma
t^2)}{24(1+\gamma t)}+\frac{n(9\gamma^2t^2+6\gamma t+2\ln(1+\gamma
t))}{16\gamma(t^2+\gamma t^3)}.
$$
\item{(3).} Taking $a(t)=\sinh^2(Kt)$, \eqref{LY-1} reduces to
$$
\frac{\Gamma(\sqrt{u})}{u}-\left(1+\frac{\sinh(Kt)\cosh(Kt)-Kt}{\sinh^2(Kt)}\right)
\frac{\partial_t(\sqrt{u})}{\sqrt{u}}\le \frac{nK}{2}(\coth(Kt)+1).
$$

 In this case,
$\alpha(t)=1+\frac{\sinh(Kt)\cosh(Kt)-Kt}{\sinh^2(Kt)}$ is bounded for all $t>0$ while the ones in \eqref{ex1} and \eqref{ex1-1} are unbounded. In the setting of Riemannian manifold, this type of estimate is firstly observed by Li and Xu in \cite{LX11}.

\item{(4).} Take $a(t)=(e^{\gamma Kt}-1)^2$ with $\gamma\neq0$, \eqref{LY-1} becomes
$$
\frac{\Gamma(\sqrt{u})}{u}-\left(1+\frac{e^{2\gamma Kt}-4e^{\gamma Kt}+2\gamma
Kt+3}{\gamma(e^{\gamma Kt}-1)^2}\right)\frac{\partial_t(\sqrt{u})}{\sqrt{u}}\le
\frac{nK\left(\left((\gamma+1)e^{\gamma Kt}-2\right)^2+2\gamma
Kt-(\gamma-1)^2\right)}{4\gamma(e^{\gamma Kt}-1)^2}.
$$
For $\gamma>0$, $\alpha(t)=1+\frac{e^{2\gamma Kt}-4e^{\gamma Kt}+2\gamma
Kt+3}{\gamma(e^{\gamma Kt}-1)^2}$ is bounded for all $t>0$, while for
$\gamma<0$, $\alpha(t)$ is also bounded for $t\in [0,T]$
with  fixed $T>0$.
\item{(5).} Taking $ a(t)=e^{-\frac{2Kt}{1+\beta}}\left(1-e^{-\frac{2Kt}{1+\beta}}\right)^{\beta}, \mbox{ with } \beta>1. $ \eqref{LY-1} reduces to
\begin{equation*}
\frac{\Gamma(\sqrt{u})}{u}-e^{\frac{2Kt}{1+\beta}}\frac{\partial_t(\sqrt{u})}{\sqrt{u}}\le
\frac{nK\beta^2}{4(\beta-1)(\beta+1)}\frac{e^{\frac{4Kt}{\beta+1}}}{e^{\frac{2Kt}{\beta+1}}-1}, \ \mbox{ where }\  t<\frac{(1+\beta)\log (1+\beta)}{2K}.
\end{equation*}

If $G$ satisfies the curvature dimension inequality $CDE(n,K)$ with $K\ge0$, we can take  $ a(t)=e^{\frac{2Kt}{1+\beta}}\left(e^{\frac{2Kt}{1+\beta}}-1\right)^{\beta}$,  with  $\beta>1$.  \eqref{LY-1} reduces to
\begin{equation*}
\frac{\Gamma(\sqrt{u})}{u}-e^{-\frac{2Kt}{1+\beta}}\frac{\partial_t(\sqrt{u})}{\sqrt{u}}\le
\frac{nK\beta^2}{4(\beta-1)(\beta+1)}\frac{e^{-\frac{4Kt}{\beta+1}}}{1-e^{-\frac{2Kt}{\beta+1}}},\ \ t>0.
\end{equation*}
\end{example}

As an application of Theorem \ref{LY1}, we have the following Liouville property:
\begin{corollary}\label{lv1}
Suppose the finite (or infinite) graph  $G$ satisfies the curvature dimension inequality $CDE(n,0)$, let $u$ be any positive solution to the equation $\Delta u=0$, then $u$ is a constant.
\end{corollary}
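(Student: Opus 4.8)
The plan is to deduce the Liouville property directly from the global Li-Yau estimate of Theorem \ref{LY1}, by exploiting that a harmonic function is a time-independent solution of the heat equation and then letting $t\to\infty$.

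First I would observe that if $u$ satisfies $\Delta u=0$, then setting $u(x,t):=u(x)$ for all $t>0$ produces a positive solution of the heat equation \eqref{heat}, since $\partial_t u=0=\Delta u$. In particular $\partial_t\sqrt{u}=0$, so that the time-derivative term in \eqref{LY-1} vanishes identically and only the gradient term $\Gamma(\sqrt{u})$ survives on the left-hand side.

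Next I would invoke Theorem \ref{LY1} under the hypothesis $CDE(n,0)$, i.e.\ with $K=0$. In this case $\alpha(t)\equiv 1$ and $\varphi(t)=\frac{n}{4a(t)}\int_0^t\frac{a'^2(s)}{a(s)}\,ds$; choosing $a(t)=t^2$ (which is admissible under (A1)--(A2)) gives $\varphi(t)=n/t$, so \eqref{LY-1} reads $\frac{\Gamma(\sqrt{u})}{u}-\frac{\partial_t\sqrt{u}}{\sqrt{u}}\le\frac{n}{2t}$. Substituting $\partial_t\sqrt{u}=0$ then yields $\frac{\Gamma(\sqrt{u})(x)}{u(x)}\le\frac{n}{2t}$ for every vertex $x$ and every $t>0$.

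Finally I would let $t\to\infty$. The right-hand side tends to $0$, forcing $\Gamma(\sqrt{u})(x)\le 0$; but $\Gamma(\sqrt{u})(x)=\frac{1}{2\mu(x)}\sum_{y\sim x}\omega_{xy}(\sqrt{u(y)}-\sqrt{u(x)})^2\ge 0$ and $u>0$, whence $\Gamma(\sqrt{u})\equiv 0$. This forces $u(y)=u(x)$ whenever $y\sim x$, so $u$ is constant on each connected component of $G$ and hence constant when $G$ is connected. Since every step is an immediate consequence of Theorem \ref{LY1}, there is essentially no serious obstacle; the only points needing a moment's care are verifying that the stationary function genuinely qualifies as a solution of the heat equation so that Theorem \ref{LY1} applies, and using that the $K=0$ bound decays in $t$, which is precisely what licenses the passage $t\to\infty$.
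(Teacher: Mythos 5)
Your proposal is correct and follows essentially the same route as the paper: the paper's (one-line) proof likewise applies the global estimate \eqref{LY-1} with $K=0$ to the time-independent solution, concludes $\Gamma(\sqrt{u})\equiv 0$ by letting $t\to\infty$, and deduces $u(y)=u(x)$ for $y\sim x$. Your write-up merely makes explicit the details the paper leaves implicit (the choice $a(t)=t^2$ giving $\varphi(t)=n/t$, the verification that a harmonic function qualifies as a heat solution, and connectedness of $G$), all of which are correct.
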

\begin{proof}
By \eqref{LY-1}, it is easy to see that $\Gamma(\sqrt{u})(x)=0$, for all $x\in V$, hence for any $x\in V$, $u(y)=u(x)$ for $y\sim x$. Thus $u$  must be a constant.
\end{proof}
To prove Theorem \ref{LY1}, let us first give the following lemma.

\begin{lemma}\label{lem1} Let $ H=a(t)\left(\frac{2\Gamma(\sqrt{u})-\alpha(t)\Delta u}{u}-\varphi(t)\right),$
where $a(t),\alpha(t),\varphi(t)$ are some smooth enough functions. We have
\begin{align}\label{diff1}
(\Delta-\partial_t)(uH)=4a(t)\widetilde{\Gamma}_2(\sqrt{u})+(a\alpha)'\Delta u-2a'(t)\Gamma(\sqrt{u})+(a \varphi)' u.
\end{align}

\end{lemma}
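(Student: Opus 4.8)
The plan is to prove the identity \eqref{diff1} by direct computation, expanding the operator $(\Delta-\partial_t)$ applied to $uH$ and using the definitions of the auxiliary functions together with the curvature-related quantity $\widetilde{\Gamma}_2$. Writing $uH = a(t)\bigl(2\Gamma(\sqrt{u})-\alpha(t)\Delta u - \varphi(t)u\bigr)$, I would first split the computation according to linearity: the time derivative $\partial_t(uH)$ produces terms from differentiating the explicit coefficients $a$, $a\alpha$, $a\varphi$ (yielding the $a'$, $(a\alpha)'$, $(a\varphi)'$ factors) plus terms where the time derivative falls on $u$ and $\Gamma(\sqrt{u})$; the spatial part $\Delta(uH)$ produces $a(t)\bigl(2\Delta\Gamma(\sqrt{u}) - \alpha(t)\Delta\Delta u - \varphi(t)\Delta u\bigr)$.

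The key step will be to convert the mixed time-derivative and Laplacian terms involving $u$ into the combination that defines $\widetilde{\Gamma}_2(\sqrt{u})$. Since $u$ solves the heat equation \eqref{heat}, I can replace every $\partial_t u$ by $\Delta u$ and, more delicately, compute $\partial_t \Gamma(\sqrt{u})$ and $\partial_t \Delta u$ using that $u_t = \Delta u$. The heart of the matter is to recognize the expression $\tfrac12\Delta\Gamma(\sqrt{u}) - \Gamma\!\left(\sqrt{u}, \tfrac{\Delta(u)}{2\sqrt{u}}\right)$, which by the definition in Definition \ref{cde} (with $f=\sqrt{u}$, noting $f^2=u$) is precisely $\widetilde{\Gamma}_2(\sqrt{u})$. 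Here I expect to lean heavily on the chain-rule-type identity \eqref{id12}, namely $\Delta(\sqrt{u}) = \tfrac{\Delta u}{2\sqrt{u}} - \tfrac{\Gamma(\sqrt{u})}{\sqrt{u}}$, to rewrite $\partial_t \sqrt{u} = \tfrac{\Delta u}{2\sqrt{u}}$ and to handle the time derivative of $\Gamma(\sqrt{u})$.

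The main obstacle will be controlling the $\Gamma$-term arising from differentiating $\Gamma(\sqrt{u})$ in time: one must show that $\partial_t \Gamma(\sqrt{u}) = 2\Gamma\!\left(\sqrt{u}, \partial_t\sqrt{u}\right) = 2\Gamma\!\left(\sqrt{u}, \tfrac{\Delta u}{2\sqrt{u}}\right) = \Gamma\!\left(\sqrt{u}, \tfrac{\Delta u}{\sqrt{u}}\right)$, using bilinearity of $\Gamma$ and the fact that $\Gamma$ acts only in the space variable so the time derivative passes through. Matching this against the spatial term $a(t)\Delta\Gamma(\sqrt{u})$ is what assembles the factor $4a(t)\widetilde{\Gamma}_2(\sqrt{u})$, since $4a\cdot\widetilde\Gamma_2(\sqrt u) = a\bigl(2\Delta\Gamma(\sqrt u) - 4\Gamma(\sqrt u,\tfrac{\Delta u}{2\sqrt u})\bigr)$. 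The remaining terms should organize cleanly: differentiating the coefficient of $\Delta u$ gives $(a\alpha)'\Delta u$, differentiating $a$ against $2\Gamma(\sqrt{u})$ gives $-2a'(t)\Gamma(\sqrt{u})$, and differentiating the coefficient of $u$ gives $(a\varphi)'u$.

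Once the bookkeeping is arranged, the result follows by collecting like terms; the only genuinely subtle point is ensuring that the cross terms between $\Delta$ acting spatially and $\partial_t$ acting on $u$ do not leave residual contributions outside the four displayed summands. I would verify this by carefully tracking that the heat equation makes the "naive" time derivative $u_t=\Delta u$ cancel against the corresponding piece of $\Delta(uH)$ at the level of the $\tfrac1{2}\Delta\Gamma$ and $\Gamma(\sqrt u, \cdot)$ structure, leaving exactly $4a(t)\widetilde\Gamma_2(\sqrt u)$ as the curvature term.
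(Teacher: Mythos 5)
Your proposal is correct and follows essentially the same route as the paper's proof: expand $\partial_t(uH)$ and $\Delta(uH)$ separately, use the heat equation to write $\partial_t u=\Delta u$, $\partial_t\Delta u=\Delta\Delta u$ and $\partial_t\Gamma(\sqrt{u})=2\Gamma\bigl(\sqrt{u},\tfrac{\Delta u}{2\sqrt{u}}\bigr)$, and then recognize $a\bigl(2\Delta\Gamma(\sqrt{u})-4\Gamma\bigl(\sqrt{u},\tfrac{\Delta u}{2\sqrt{u}}\bigr)\bigr)=4a\,\widetilde{\Gamma}_2(\sqrt{u})$ while the coefficient derivatives assemble into $(a\alpha)'\Delta u$, $-2a'\Gamma(\sqrt{u})$ and $(a\varphi)'u$. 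One cosmetic remark: $\partial_t\sqrt{u}=\tfrac{\Delta u}{2\sqrt{u}}$ needs only the ordinary chain rule in the (continuous) time variable together with the heat equation, not the graph identity \eqref{id12}, which in fact plays no role in this lemma.
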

\begin{proof}
Direct computation gives
\begin{align*}
\partial_t(u H)=a' \left(2\Gamma(\sqrt{u})-\alpha \Delta u-\varphi u \right)+a \left(4\Gamma\left(\sqrt{u},\frac{\Delta u}{2\sqrt{u}}\right)-\alpha \Delta \partial_tu-\alpha' \Delta u-\varphi' u-\varphi \partial_tu\right),
\end{align*}

\begin{align*}
\Delta(uH)&=a(t)\left(2\Delta\Gamma(\sqrt{u})-\alpha(t)\Delta\Delta u-\varphi\Delta u\right).
\end{align*}
It follows
\begin{align*}
(\Delta-\partial_t)(uH)&=a(t)\left(2\Delta\Gamma(\sqrt{u})-4\Gamma\left(\sqrt{u},\frac{\Delta u}{2\sqrt{u}}\right)+\alpha'(t)\Delta u+\varphi'u\right)\\
&\hskip 12pt -a'(t)\left(2\Gamma(\sqrt{u})-\alpha(t)\Delta u\right)\\
&=4a(t)\widetilde{\Gamma}_2(\sqrt{u})+(a\cdot\alpha)'\Delta u-2a'(t)\Gamma(\sqrt{u})+(a\varphi)'u,
\end{align*}
which is the desired result.
\end{proof}

Now let us to
\begin{proof}[Proof of Theorem \ref{LY1}] As in the above lemma, let  \begin{equation}\label{def} H(x,t)=a(t)\left(\frac{2\Gamma(\sqrt{u})-\alpha(t)\Delta u}{u}-\varphi(t)\right),\end{equation}
 where the functions $\alpha,\varphi$ are defined in \eqref{notation1}. Fix an arbitrary $T>0$. Our goal is to show that: $H(x,t)\le 0$ holds  for all $ x\in V, t\in [0,T]$. To this end, we divide it into three cases.

\begin{description}
\item{(a).} For $(x,t)\in S_T^c$ and $t>0$, that is $\Delta\sqrt{u}(x,t)\ge0$. Applying\eqref{id12}, it follows $\Delta u(x,t)\ge 2\Gamma(\sqrt{u})(x,t)$, so
    $$
    H(x,t)\le a(t)\left(\frac{2(1-\alpha(t))\Gamma(\sqrt{u})}{u}-\varphi(t)\right).
    $$
Since $\alpha(t)\ge1$, $a(t)>0$ and $\varphi(t)>0$, we have $H(x,t)\le 0$ for $(x,t)\in S_T^c$ and  $t>0$.
\item{(b).} For $(x,t)\in S_T$, that is $\Delta\sqrt{u}(x,t)<0$. By \eqref{diff1} in Lemma \ref{lem1}, we have
    $$
(\Delta-\partial_t)(uH)=4a(t)\widetilde{\Gamma}_2(\sqrt{u})+(a\cdot\alpha)'\Delta u-2a'(t)\Gamma(\sqrt{u})+(a \varphi)' u.
$$

Applying the curvature dimension condition $CDE(n,-K)$, the positivity of $a$ gives
\begin{align*}
(\Delta-\partial_t)(uH)&\ge \frac{4au }{n} \left(\frac{\Delta u-2\Gamma(\sqrt{u})}{2u}\right)^2-4Ka \Gamma(\sqrt{u})+(a\cdot \alpha)'\Delta u-2a' \Gamma(\sqrt{u})+(a \varphi)'u\\
&\ge -\frac{2a\eta}{n}\left(\Delta u-2\Gamma(\sqrt{u})\right)-\frac{au\eta^2}{n}-\left(4Ka+2a'\right)\Gamma(\sqrt{u})+(a\cdot \alpha)'\Delta u+(a \varphi)'u\\
&=\left((a\alpha)'-\frac{2a\eta}{n}\right)\Delta u+\left(-4Ka-2a'+\frac{4a\eta}{n}\right)\Gamma(\sqrt{u})+\left((a\varphi)' -\frac{a\eta^2}{n}\right)u.
\end{align*}

Now let us choose $\eta=\frac{na'}{2a}+nK$, we find that $\alpha,\varphi$ defined in \eqref{notation1} satisfy
\begin{gather}
(a\alpha)'-\frac{2a\eta}{n}=0,\label{diff2}\\
-4Ka-2a'+\frac{4a\eta}{n}=0,\tag{\ref{diff2}$'$}\\
(a\varphi)' -\frac{a\eta^2}{n}=0.\tag{\ref{diff2}$''$}
\end{gather}
Hence we have $(\Delta-\partial_t)(uH)(x,t)\ge0$ for $(x,t)\in S_T$.
\item{(c).} For any $x\in V$, it is easy to see that $H(x,0)=\lim_{t\to0}H(x,t)=0$.
\end{description}

Thanks to  Proposition \ref{mp3}, we have, for all $x\in V,t\in [0,T]$, $u(x,t)H(x,t)\le0$. Hence $H(x,t)\le0$, $\forall x\in V,x\in [0,T]$, thus we complete the proof.   \end{proof}

\begin{remark}
(1). The functions $\alpha,\varphi$ defined in \eqref{notation1} are uniquely determined by the above Eq. (\ref{diff2}), (\ref{diff2}$'$) and (\ref{diff2}$''$).

(2). From the above proof, we have, for $(x,t)$ satisfying $\Delta\sqrt{u}(x,t)<0$,
\begin{equation}\label{diff3}
(\Delta-\partial_t)(uH)(x,t)\ge \frac{au}{n}\left(\frac{2\Gamma(\sqrt{u})-\Delta u}{u}-\frac{na'}{2a}-nK\right)^2.
\end{equation}
\end{remark}

\subsection{Local estimates}
To state the local gradient estimate, let us introduce an additional condition:

{\bf Condition A} There exists some  positive function $\beta$ satisfying 
\begin{equation}\label{bound1}
 \hskip -12pt\frac{\beta'(t)}{\beta(t)}\le\frac{1}{\alpha^2(t)a^2(t)}\left(2k\alpha
a'\int_0^tads+\frac{a}2\int_0^t\frac{a'^2}{a}ds
-2k^2a\int_0^tads\right) \ \mbox{holds for all }\ t>0,
\end{equation}
 in addition, for any fixed $T>0$, there exists some finite, positive function $\eta(T)$ such that $\beta(t)\le \eta(T)$ holds for all $0<t\le T$.

\begin{theorem}\label{local} Let $G=(V,E)$ be a finite (or infinite) graph and $R>0$, and fix $x_0\in V$. Let $u:V\times \mathbb{R}\to \mathbb{R}^+$ a positive function such that $\partial_tu(x,t)=\Delta u(x,t)$, if $d(x,x_0)\le 2R$. If $G$ satisfies the exponential curvature dimension inequality $CDE(n,-K)$ ($K\ge0$), then for $t>0$,
\begin{align}\label{LY-2}
\frac{\Gamma(\sqrt{u})}{u}-\alpha(t)\frac{\partial_t (\sqrt{u})}{\sqrt{u}}-\frac12\varphi(t) \le \frac{nD_{\mu}(1+D_{\omega})\alpha^2(t)\eta(t) }{R\beta(t)}
\end{align}
holds in the ball of radius $R$ around $x_0$, where $\alpha(t),\varphi(t)$ are defined in \eqref{notation1}. Letting $R\to\infty$ gives \eqref{LY-1}.
\end{theorem}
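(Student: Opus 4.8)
The plan is to localize the global argument behind Theorem \ref{LY1} with a spatial cutoff together with the time weight $\beta$ supplied by Condition A, and to run the maximum-principle machinery of Section \ref{sec-mp} on a finite parabolic cylinder. First I would fix a Lipschitz cutoff $\phi:V\to[0,1]$ adapted to the ball, for instance $\phi(x)=\bigl(1-\tfrac{(d(x,x_0)-R)_+}{R}\bigr)_+$, so that $\phi\equiv1$ on $B(x_0,R)$, $\phi\equiv0$ outside $B(x_0,2R)$, and $|\phi(x)-\phi(y)|\le 1/R$ whenever $x\sim y$ (since $|d(x,x_0)-d(y,x_0)|\le1$ for adjacent vertices). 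With $H$ as in \eqref{def} and $w:=uH$, I would study the localized function $G:=\beta(t)\,\phi\, w$ on the compact set $\overline{B(x_0,2R)}\times[0,T]$. Because $u$ solves the heat equation only where $d(\cdot,x_0)\le 2R$, the cutoff confines the analysis to the region where both the equation and the curvature bound are valid. If $\max G\le0$ or the maximum is attained at $t=0$, then on $B(x_0,R)$ we get $H\le0$, which is stronger than \eqref{LY-2}; so I may assume $G$ attains a positive maximum at some $(x^*,t^*)$ with $t^*>0$ and $x^*\in B(x_0,2R)$, where $\partial_tG\ge0$ and $\Delta G\le0$, hence $(\Delta-\partial_t)G(x^*,t^*)\le0$.

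Next, using the discrete Leibniz rule $\Delta(\phi w)=\phi\Delta w+w\Delta\phi+2\Gamma(\phi,w)$ and the time-independence of $\phi$, I would expand
\[
(\Delta-\partial_t)G=\beta\phi(\Delta-\partial_t)w+\beta\,w\,\Delta\phi+2\beta\,\Gamma(\phi,w)-\beta'\phi\,w.
\]
At a positive maximum one necessarily has $\Delta\sqrt u(x^*,t^*)<0$ (otherwise the case (a) argument of Theorem \ref{LY1} forces $w\le0$), so the point lies in $S_T$ and I may invoke \eqref{diff3}, namely $(\Delta-\partial_t)w\ge \tfrac{au}{n}\bigl(\tfrac{2\Gamma(\sqrt u)-\Delta u}{u}-\tfrac{na'}{2a}-nK\bigr)^2$, as the good quadratic term. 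The term $-\beta'\phi w=-\tfrac{\beta'}{\beta}G$ is where Condition A enters: the bound \eqref{bound1} on $\beta'/\beta$ is calibrated, via the identities \eqref{diff2}, (\ref{diff2}$'$), (\ref{diff2}$''$) defining $\alpha$ and $\varphi$, precisely so that the combined curvature and time-derivative contributions are controlled by the quadratic term, leaving only the cutoff errors to estimate and producing the factor $\alpha^2\eta/\beta$ on the right-hand side.

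I expect the main obstacle to be the estimate of the discrete cutoff-error terms $\Gamma(\phi,w)$ and $w\,\Delta\phi$; unlike the Riemannian case there is no integration by parts and no genuine chain rule, so these differences must be handled pointwise. At the maximum one has $\phi(y)w(y)\le\phi(x^*)w(x^*)$ for every $y\sim x^*$, and combining this with $|\phi(y)-\phi(x^*)|\le1/R$, the lower bound $\omega_{xy}\ge\omega_{\min}$, and the structural constants $D_\mu,D_\omega$, I would bound these terms by a multiple of $1/R$ times the quadratic quantity from \eqref{diff3} plus a lower-order remainder. A Young-type splitting then lets the quadratic part be absorbed by the good term, while the remainder yields the $1/R$ order carrying the combinatorial constant $nD_\mu(1+D_\omega)$; getting this precise first-order order in $1/R$ and the exact constant is the delicate step.

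Finally, assembling the pieces at $(x^*,t^*)$, solving the resulting inequality for $w(x^*)=u(x^*)H(x^*)$, restricting to $B(x_0,R)$ where $\phi\equiv1$, and using $\beta(t)\le\eta(T)$, would give
\[
\frac{H(x,t)}{2a(t)}=\frac{\Gamma(\sqrt u)}{u}-\alpha(t)\frac{\partial_t\sqrt u}{\sqrt u}-\tfrac12\varphi(t)\le\frac{nD_\mu(1+D_\omega)\alpha^2(t)\eta(t)}{R\,\beta(t)},
\]
which is exactly \eqref{LY-2} since $H/(2a)$ equals the left-hand side of the asserted estimate. Letting $R\to\infty$ annihilates the error term and recovers the global estimate \eqref{LY-1}, in agreement with Theorem \ref{LY1}.
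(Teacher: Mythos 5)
Your overall frame (localize the function $H$ from \eqref{def} with a spatial cutoff and the time weight $\beta$, run the maximum principle, use \eqref{diff3} as the good quadratic term, and let Condition A absorb the $\beta'/\beta$ contribution) matches the paper, but the decisive localization step is exactly where your argument breaks, and it is the one place where the paper does something genuinely different. You localize multiplicatively, setting $G=\beta\phi w$ with $w=uH$, and expand $(\Delta-\partial_t)G=\beta\phi(\Delta-\partial_t)w+\beta w\Delta\phi+2\beta\Gamma(\phi,w)-\beta'\phi w$, hoping to control the cutoff errors pointwise at the maximum. But the two error terms combine to
\begin{equation*}
w\Delta\phi+2\Gamma(\phi,w)\;=\;\frac{1}{\mu(x^*)}\sum_{y\sim x^*}\omega_{x^*y}\bigl(\phi(y)-\phi(x^*)\bigr)\,w(y),
\end{equation*}
and what the maximum-point inequality $(\Delta-\partial_t)G(x^*,t^*)\le0$ requires is a \emph{lower} bound on this sum. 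The maximum property supplies only upper bounds $\phi(y)w(y)\le\phi(x^*)w(x^*)$. If $x^*$ sits in the annulus $R\le d(x^*,x_0)\le 2R$, a neighbor $y$ closer to $x_0$ has $\phi(y)>\phi(x^*)$, so its coefficient is positive, while $w(y)=a\,\bigl(2\Gamma(\sqrt u)-\alpha\Delta u-\varphi u\bigr)(y)$ can be arbitrarily negative: it involves $u$ at vertices at distance two from $x^*$, which nothing at $(x^*,t^*)$ controls, quantitatively or otherwise. Hence no Young-type splitting can absorb these terms into the quadratic from \eqref{diff3}, which lives only at $x^*$. This is precisely the known failure of the Riemannian cutoff computation on graphs (no chain rule), and it is why the paper instead invokes Lemma 4.1 of \cite{BHLLMY} with $F=u/\phi$: at the maximum one gets $(\Delta-\partial_t)(u/\phi)\,G\ge(\Delta-\partial_t)(uG/\phi)$, so the only cutoff quantity ever estimated is $\frac{1}{u}(\Delta-\partial_t)(u/\phi)\le 2D_\mu D_\omega/R$, and no $\Gamma(\phi,\cdot)$ or $\Delta\phi$ term appears at all.

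Two further consequences of that device are absent from your sketch. First, since $u/\phi$ must be finite at neighbors of $x^*$, the paper needs the dichotomy $\phi(x^*)=1/R$ (handled directly via $-\Delta\sqrt u/\sqrt u\le D_\mu$, which is where the ``$1$'' in the constant $nD_\mu(1+D_\omega)$ comes from) versus $\phi(x^*)\ge 2/R$; your single-case treatment cannot produce the stated constant. Second, the $1/R$ (rather than $1/R^2$) rate in \eqref{LY-2} is itself a signature of this weak-cutoff argument: the paper only reaches $1/R^2$ under the additional strong-cutoff hypothesis of the Appendix (Theorem \ref{local-2}), so a scheme that mimics the manifold computation and still lands exactly on \eqref{LY-2} should be viewed with suspicion. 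Your reduction of \eqref{LY-2} to $H/(2a)\le nD_\mu(1+D_\omega)\alpha^2\eta/(R\beta)$, the observation that a positive maximum forces $\Delta\sqrt u(x^*,t^*)<0$, and the role you assign to \eqref{bound1} are all correct; but as it stands the proposal has a genuine gap at its central step, and repairing it means replacing the multiplicative cutoff expansion by the $u/\phi$ mechanism the paper uses.
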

Before the proof, let us give some examples.
\begin{example}
\begin{description}
\item{(1). } Taking $a(t)=t^{\gamma}$ with $1<\gamma<3$. In this case
$\alpha(t)=1+\frac{2Kt}{1+\gamma}$, $\varphi(t)=nK+\frac{nK^2\gamma}{(1+\gamma)}+\frac{n\gamma^2}{4(\gamma-1)t}$, and we can choose $\beta(t)=\exp\left(\int_1^t\frac{(\gamma+\frac{2(\gamma-1)Ks}{1+\gamma})^2}
{2s(\gamma-1)(1+\frac{2Ks}{1+\gamma})^2}ds\right),\eta(t)=\beta(t).$ Hence
 \eqref{LY-2} reduces to
\begin{align*}
\frac{\Gamma(\sqrt{u})}{u}-\left(1+\frac{2Kt}{1+\gamma}\right)\frac{\partial_t (\sqrt{u})}{\sqrt{u}}-\frac12\left(nK+\frac{nK^2\gamma}{(1+\gamma)}+\frac{n\gamma^2}{4(\gamma-1)t}\right) \le \frac{nD_{\mu}(1+D_{\omega})}{R}\left(1+\frac{2Kt}{1+\gamma}\right)^2.
\end{align*}
\item{(2). } Take $a(s)=\sinh^2(Ks)$. In this case,
$$
\alpha(t)=1+\frac{\sinh(Kt)\cosh(Kt)-Kt}{\sinh^2(Kt)},
\varphi(t)=nK(1+\coth(Kt)).
$$
As in \cite{Qian}, we can choose$$ \beta(t)=\tanh(Kt), \eta(t)=\beta(t),
$$
such that {\bf Condition A} holds. Hence \eqref{LY-2} reduces to
\begin{align*}\frac{\Gamma(\sqrt{u})}{u}&-\left(1+\frac{\sinh(Kt)\cosh(Kt)-Kt}{\sinh^2(Kt)}\right)\frac{\partial_t (\sqrt{u})}{\sqrt{u}}-\frac{nK}{2}(1+\coth(Kt))\\
 &\le\frac{nD_{\mu}(1+D_{\omega})}{R}\left(1+\frac{\sinh(Kt)\cosh(Kt)-Kt}{\sinh^2(Kt)}\right)^2.
\end{align*}

\item{(3). } Take
$a(t)=e^{-\frac{2Kt}{1+\beta}}\left(1-e^{-\frac{2Kt}{1+\beta}}\right)^\beta$ with  $\beta\in(1,2]$ for $t<\frac{(1+\beta)\log (1+\beta)}{2K}$. In this case, 
$$
\alpha(t)=e^{\frac{2Kt}{1+\beta}},\
\varphi(t)=\frac{nK\beta^2}{2(\beta-1)(\beta+1)}\frac{e^{\frac{4Kt}{\beta+1}}}{e^{\frac{2Kt}{\beta+1}}-1},
$$
As in \cite{Qian}, we can choose
$$
\beta(t)=e^{-\frac{2Kt}{1+\beta}}\left(1-e^{-\frac{2Kt}{1+\beta}}\right)^{\beta},
\eta(t)=\beta(t),
$$
hence \eqref{LY-2} reduces to \begin{equation*}
\frac{\Gamma(\sqrt{u})}{u}-e^{\frac{2Kt}{1+\beta}}\frac{\partial_t(\sqrt{u})}{\sqrt{u}}-
\frac{nK\beta^2}{4(\beta-1)(\beta+1)}\frac{e^{\frac{4Kt}{\beta+1}}}{e^{\frac{2Kt}{\beta+1}}-1}\le \frac{nD_{\mu}(1+D_{\omega}) }{R}e^{\frac{4kt}{1+\beta}}
\end{equation*}
where $ t<\frac{(1+\beta)\log (1+\beta)}{2K}$.
If the curvature dimension inequality $CDE(n,K)$ with $K\ge0$, we can take  $ a(t)=e^{\frac{2Kt}{1+\beta} }\left(e^{\frac{2Kt}{1+\beta}}-1\right)^{\beta}$,  with  $\beta\in(1,2]$. In this case, we can choose $\beta(t)=\eta(t)=e^{\frac{2Kt}{1+\beta} }\left(e^{\frac{2Kt}{1+\beta}}-1\right)^{\beta}$. Hence \eqref{LY-2} reduces to
\begin{equation*}
\frac{\Gamma(\sqrt{u})}{u}-e^{-\frac{2Kt}{1+\beta}}\frac{\partial_t(\sqrt{u})}{\sqrt{u}}-
\frac{nK\beta^2}{4(\beta-1)(\beta+1)}\frac{e^{-\frac{4Kt}{\beta+1}}}{1-e^{-\frac{2Kt}{\beta+1}}}
\le \frac{nD_{\mu}(1+D_{\omega}) }{R}e^{-\frac{4Kt}{1+\beta}}, \ \forall t>0.
\end{equation*}
\end{description}
\end{example}

Now let us to
\begin{proof}[Proof of Theorem \ref{local}]
Consider the following cut-off function $\phi$, see \cite{BHLLMY},  defined as
$$
\phi(x)=
\begin{cases}
0&: \ \ d(x,x_0)> 2R;\\
\frac{2R-d(x,x_0)}{R}&:\ \  R\le d(x,x_0) \le 2R;\\
1&:\ \ d(x,x_0)<R.
\end{cases}
$$

Let  $$G=\beta(t)\phi\left(\frac{2\Gamma(\sqrt{u})-\alpha(t)\Delta u}{u}-\varphi(t)\right)
=\beta(t)\phi\left(\frac{2(1-\alpha)\Gamma(\sqrt{u})-2\alpha(t)\sqrt{u}\Delta\sqrt{u}}{u}-\varphi(t)\right),$$ where $\beta(t)$ satisfies {\bf Condition A}, and let $(x^*,t^*)$ be the place of where $G$ attains its maximum in $V\times [0,T]$ for any arbitrary but fixed $T>0$. Without loss of any generality, we can assume $G(x^*,t^*)>0$, otherwise there is nothing to prove. It follows $t^*>0, \phi(x^*)>0$ and $\Delta \sqrt{u}(x^*,t^*)<0$. To prove the desired result, let us divide into two cases.

\begin{description}
\item{Case (a). $\phi(x^*)=\frac{1}{R}$, that is $d(x_0,x^*)=2R-1$. } Notice that
    \begin{align*}
    G(x^*,t^*)&=\beta(t^*) \phi(x^*)\left(\frac{2(1-\alpha)\Gamma(\sqrt{u})-2\alpha(t)\sqrt{u}\Delta\sqrt{u}}{u}-\varphi(t)\right){(x^*,t^*)}\\
    &\le 2\alpha(t^*)\beta(t^*)\phi(x^*)\frac{-\Delta\sqrt{u}(x^*,t^*)}{\sqrt{u}(x^*,t^*)}.
    \end{align*}

Since positivity of $u$ implies that for any vertex $x\in V$,
$$
\frac{-\Delta\sqrt{u}(x)}{\sqrt{u}(x)}=\frac{1}{\mu(x)}\sum_{y\sim x}\omega_{xy}\left(1-\frac{\sqrt{u}(y)}{\sqrt{u}(x)}\right)\le \frac{deg(x)}{\mu(x)}\le D_{\mu},
$$
hence we have
\begin{align*}
G(x,T)\le G(x^*,t^*)\le \frac{2D_{\mu}\alpha(t^*)\beta(t^*)}{R}\le \frac{2D_{\mu}\alpha(T)\eta(T)}{R}.
\end{align*}

\item{Case (b). $\phi(x^*)\ge\frac{2}{R}$, that is $d(x_0,x^*)\le 2R-2$.}
In what follows, all computation are understood at the point $(x^*,t^*)$. Applying Lemma 4.1 in \cite{BHLLMY}  with the case of $F=u/\phi$ and $H=G$, we have
\begin{equation*}
(\Delta-\partial_t)\left(\frac{u}{\phi}\right)G\ge (\Delta-\partial_t)\left(\frac{uG}{\phi}\right) =(\Delta-\partial_t)\left(\frac{\beta }{a}uH\right),
\end{equation*}
where $H$ is defined by \eqref{def}. Applying \eqref{diff3}, we have
\begin{align*}
(\Delta-\partial_t)\left(\frac{\beta }{a}uH\right)&=\frac{\beta}{a}(\Delta-\partial_t)(uH)+uH(\Delta-\partial_t)\left(\frac{\beta}{a}\right)\\
&\ge \frac{\beta u}{n}\left(\frac{2\Gamma(\sqrt{u})-\Delta u}{u}-\frac{na'}{2a}-nK\right)^2+\left(\frac{a'}{a}-\frac{\beta'}{\beta}\right)\frac{uG}{\phi}\\
&=\frac{\beta u}{n\alpha^2}\left(\frac{G}{\beta\phi}+2(\alpha-1)\frac{\Gamma(\sqrt{u})}{u}+\varphi-\frac{n\alpha a'}{2a}-nK\alpha\right)^2+\left(\frac{a'}{a}-\frac{\beta'}{\beta}\right)\frac{uG}{\phi}\\
&\ge \frac{uG^2}{n\alpha^2\beta \phi^2}+\frac{2uG}{n\alpha^2\phi}\left(\varphi-\frac{n\alpha a'}{2a}-nK\alpha\right)+\left(\frac{a'}{a}-\frac{\beta'}{\beta}\right)\frac{uG}{\phi}.
\end{align*}
Notice that
\begin{equation}
\frac{a'}{a}+\frac{2}{n\alpha^2}\left(\varphi-\frac{n\alpha a'}{2a}-nK\alpha\right)=\frac{1}{a^2\alpha^2}\left(2K\alpha a'\int_0^ta(s)ds+\frac{a}{2}\int_0^t\frac{a'^2}{a}ds-2K^2a\int_0^tads\right),
\end{equation}
Putting the above inequalities together,  by condition \eqref{bound1}, we have
$(\Delta-\partial_t)\left(\frac{u}{\phi}\right)G\ge \frac{uG^2}{n\alpha^2\beta \phi^2}$, hence
\begin{align*}
G(x^*,t^*)&\le n\alpha^2(t^*)\beta(t^*) \frac{\phi^2(x^*)}{u(x^*,t^*)}(\Delta-\partial_t)\left(\frac{u}{\phi}\right)(x^*,t^*)\\
&\le \frac{2nD_{\mu}D_{\omega}\alpha^2(t^*)\beta(t^*) }{R},
\end{align*}
where the last inequality follows from the fact that
\begin{equation*}
\frac{1}{u(x^*,t^*)}(\Delta-\partial_t)\left(\frac{u}{\phi}\right)(x^*,t^*)
\le\frac{2D_{\mu}D_{\omega}}{R},
\end{equation*}
which have been observed in  \cite{BHLLMY}. Thus,
\begin{align*}
G(x,T)\le G(x^*,t^*)\le \frac{2nD_{\mu}D_{\omega}\alpha^2(T)\eta(T) }{R}.
\end{align*}
\end{description}
Combining the above two cases and the fact $\alpha\ge1$, we have for any $T>0$
$$
G(x,T)\le \frac{2nD_{\mu}(1+D_{\omega})\alpha^2(T)\eta(T) }{R},
$$
Dividing by $\beta(T)$ in the both sides, the arbitrariness of $T$ gives the desired result.\end{proof}

Furthermore, if there exists a strong cut-off function, see definition 5 in
\cite{BHLLMY}, we can show a similar version of Theorem \ref{local} holds with $1/R^2$ instead of $1/R$, see Appendix.

\section{Hamilton type gradient estimates}\label{sec-Ham}
In this section, we will prove a version of Hamilton type gradient estimate for the solutions to heat equation \eqref{heat} on graph $G$.

First let us recall the Hamilton type gradient estimate on Riemannian manifolds.
Let $M$ be a connected Riemannian manifold of dimension
$n$ and $\Delta$ be the Laplace-Beltrami operator on $M$.
Suppose $u(t,\cdot)$ is the positive solution to the  heat equation $
\Delta u(t,\cdot)=\frac{\partial}{\partial t}u(t,\cdot)$
where the initial heat $u(0,\cdot)$.
Assume the Ricci curvature is bounded below, i.e.
$Ricci\ge -K$ for some constant $K\ge0$, Hamilton
\cite{Ham} obtained the following gradient estimate
on compact Riemannian manifolds $M$:

{\bf Theorem } Assume that the solution $u$ to the heat equation is bounded, i.e.   $u\le A$ for $A$ is some positive constant, we have
\begin{equation}\label{hamil0}
|\nabla \log u|^2\le \frac{(1+2Kt)}{t}\log \frac{A}{u}.
\end{equation}

Let us first state the following lemma:
\begin{lemma}\label{zero}
Assume that for the positive solution $u(x,t)$ to the heat equation \eqref{heat} and $|\Delta u|(x,t)$ is differentiable for $t$ in the set of $\Delta u(x,t)=0$, then we have
$$
(\Delta-\partial_t)|\Delta u|(x_0,t_0)\ge0,
$$
for the point $(x_0, t_0)$ satisfying $\Delta u(x_0,t_0)=0$. Hence $(\Delta-\partial_t)|\Delta u|(x,t)\ge0$ holds for any $(x,t)\in V\times \mathbb{R}$.
\end{lemma}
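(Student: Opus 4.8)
The plan is to reduce everything to a discrete Kato-type inequality for the auxiliary function $v:=\Delta u$. The conceptual starting point is that $v$ itself solves the heat equation: since $\Delta$ does not depend on $t$ it commutes with $\partial_t$, and therefore
$$(\Delta-\partial_t)v=\Delta(\Delta u)-\partial_t(\Delta u)=\Delta(\Delta u-\partial_t u)=0.$$
Thus the task becomes showing that $|v|$ is a subsolution, that is $(\Delta-\partial_t)|v|\ge0$, given that $v$ is a solution.

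First I would handle the spatial half by a termwise triangle inequality. Fix a point $(x,t)$ and set $\sigma=\operatorname{sgn} v(x,t)$ (taking $\sigma=+1$ arbitrarily when $v(x,t)=0$). Since $|v(y,t)|\ge\sigma v(y,t)$ for every neighbour $y\sim x$ while $|v(x,t)|=\sigma v(x,t)$, comparing the two sums in the definition of the graph Laplacian gives
$$\Delta|v|(x,t)=\frac{1}{\mu(x)}\sum_{y\sim x}\omega_{xy}\bigl(|v(y,t)|-|v(x,t)|\bigr)\ge\sigma\,\Delta v(x,t).$$
In particular, at a point with $v(x_0,t_0)=0$ the subtracted term vanishes, so $\Delta|v|(x_0,t_0)=\frac{1}{\mu(x_0)}\sum_{y\sim x_0}\omega_{x_0 y}|v(y,t_0)|\ge0$.

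Next comes the time derivative, which is the only delicate point and the reason for the differentiability hypothesis. At a point with $v(x_0,t_0)=0$, the function $t\mapsto|v|(x_0,t)$ is nonnegative and vanishes at $t_0$, so $t_0$ is a global minimum of this one-variable function; by the assumed differentiability of $|\Delta u|$ in $t$ at such points, its derivative at an interior minimum is zero, i.e. $\partial_t|v|(x_0,t_0)=0$. Combined with the spatial estimate this yields $(\Delta-\partial_t)|v|(x_0,t_0)=\Delta|v|(x_0,t_0)\ge0$, the stated inequality on the zero set of $\Delta u$.

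Finally, for the ``Hence'' assertion I would dispose of the complementary points. Where $v(x,t)\neq0$ the sign of $v$ is locally constant in $t$, so $|v|(x,\cdot)=\sigma v(x,\cdot)$ near $t$ and $\partial_t|v|(x,t)=\sigma\,\partial_t v(x,t)$; together with the spatial inequality this gives
$$(\Delta-\partial_t)|v|(x,t)\ge\sigma\,\Delta v(x,t)-\sigma\,\partial_t v(x,t)=\sigma\,(\Delta-\partial_t)v(x,t)=0,$$
using that $v$ solves the heat equation. Hence $(\Delta-\partial_t)|\Delta u|\ge0$ at every $(x,t)\in V\times\mathbb{R}$. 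I expect no computational obstacle here; the genuine issue is the non-smoothness of $|v|$ on the zero set, which is dealt with precisely by the minimality observation above, and this is exactly why differentiability there is assumed in the statement.
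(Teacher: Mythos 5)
Your proof is correct, and while it rests on the same two pillars as the paper's argument---a termwise triangle inequality for the graph Laplacian of $|\Delta u|$ and the fact that $v=\Delta u$ again solves the heat equation---it is organized differently at the two places that matter, and in each case your version is the cleaner or more complete one. On the zero set the paper does not use your interior-minimum observation: it estimates $\partial_t|\Delta u|(x_0,t_0)$ by a $\limsup$ computation, bounding it above by $\frac{1}{\mu(x_0)}\bigl|\sum_{y\sim x_0}\omega_{x_0y}\,\partial_t u(y,t_0)\bigr|$, and then closes the argument by combining $\partial_t u=\Delta u$ with the triangle inequality $\sum_{y\sim x_0}\omega_{x_0y}|\Delta u|(y,t_0)\ge\bigl|\sum_{y\sim x_0}\omega_{x_0y}\Delta u(y,t_0)\bigr|$; your remark that $t_0$ is a global minimum of the nonnegative function $t\mapsto|v|(x_0,t)$, so that the assumed differentiability forces $\partial_t|v|(x_0,t_0)=0$, yields the stronger conclusion $(\Delta-\partial_t)|v|(x_0,t_0)=\Delta|v|(x_0,t_0)\ge0$ with no computation and uses the hypothesis in its natural role. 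More importantly, the paper's closing sentence (``Hence $(\Delta-\partial_t)|\Delta u|\ge0$ holds for any $(x,t)$'') is asserted without proof, even though it is this global form that is actually invoked later in the proof of Theorem \ref{hamilton1}; your Kato-type argument off the zero set---the sign of $v(x,\cdot)$ is locally constant in $t$, so $(\Delta-\partial_t)|v|\ge\sigma(\Delta-\partial_t)v=0$---supplies exactly that missing half. The only point worth flagging is the identity $(\Delta-\partial_t)v=0$: commuting $\Delta$ with $\partial_t$ means differentiating the sum defining $\Delta$ term by term, which is legitimate here because the standing assumptions $\omega_{min}>0$ and $D_{\mu}<\infty$ make the graph locally finite.
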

\begin{proof}
Since $\Delta u(x_0,t_0)=0$, it follows $\sum_{y\sim x_0}\omega_{x_0y}u(y,t_0)=\deg(x_0)u(x_0,t_0)$ and $\partial_tu(x_0,t_0)=0$. Through direct computation, we have

\begin{align*}
\partial_t|\Delta u|(x_0,t_0)&=\frac{1}{\mu(x_0)}\limsup_{t'\to t_0}\frac{\left|\sum_{y\sim x_0}\omega_{x_0y}(u(y,t')-u(x_0,t'))\right|}{t'-t_0}\\
&\le \frac{1}{\mu(x_0)}\lim_{t'\to t_0}\left|\sum_{y\sim x_0}\omega_{x_0y}\frac{(u(y,t')-u(y,t_0))-(u(x_0,t')-u(x_0,t_0))}{t'-t_0}\right|\\
&=\frac{1}{\mu(x_0)}\left|\sum_{y\sim x_0}\omega_{x_0y}\partial_tu(y,t_0)\right|,
\end{align*}
thus we obtain
\begin{align*}
(\Delta-\partial_t)|\Delta u|(x_0,t_0)&=\frac{1}{\mu(x_0)}\left(\sum_{y\sim x_0}\omega_{x_0y}|\Delta u|(y,t_0)-\partial_t|\Delta u|(x_0,t_0)\right)\\
&\ge \frac{1}{\mu(x_0)}\left(\sum_{y\sim x_0}\omega_{x_0y}|\Delta u|(y,t_0)-\left|\sum_{y\sim x_0}\omega_{x_0y}\partial_tu(y,t_0)\right|\right)\\
&\ge \frac{1}{\mu(x_0)}\left(\left|\sum_{y\sim x_0}\omega_{x_0y}\Delta u(y,t_0)\right|-\left|\sum_{y\sim x_0}\omega_{x_0y}\partial_tu(y,t_0)\right|\right)=0.
\end{align*}
The desired result follows.\end{proof}

Now let us state the main results in this section.
\begin{theorem}\label{hamilton1}
Let $G=(V,E)$ be a finite (or infinite) graph satisfying the curvature dimension condition $CDE(\infty,-K)$ with $K\ge0$, and let $u$ be a positive solution to the heat equation \eqref{heat} on $G$. Assume that $|\Delta u|(x,t)$ is differentiable for $t$ in the set of $\Delta u(x,t)=0$ and $u\le A$ for some positive constant $A$,  then for all $t>0$,
\begin{equation}\label{Ham-12}
\Gamma(\sqrt{u})\le \frac12 \left|\Delta u \right|+\frac{(1+2Kt)\sqrt{A}}{t}\sqrt{u}.
\end{equation}
\end{theorem}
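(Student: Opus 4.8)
The plan is to run the parabolic maximum principle of Proposition \ref{mp3} on an auxiliary function built from the target inequality, in the spirit of the proof of Theorem \ref{LY1}. After clearing the $1/t$ singularity, estimate \eqref{Ham-12} is equivalent to $F(x,t)\le 0$ on $V\times[0,T]$ for every fixed $T>0$, where
$$
F=t\,\Gamma(\sqrt{u})-\frac{t}{2}\left|\Delta u\right|-(1+2Kt)\sqrt{A}\,\sqrt{u}.
$$
The two computational inputs that make everything transparent are the chain-rule identity \eqref{id12} together with the heat equation: they give $(\Delta-\partial_t)\sqrt{u}=-\Gamma(\sqrt{u})/\sqrt{u}$, and, since $\frac{\Delta u}{2\sqrt{u}}=\partial_t\sqrt{u}$, the relation $\widetilde{\Gamma}_2(\sqrt{u})=\frac12(\Delta-\partial_t)\Gamma(\sqrt{u})$. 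The first differentiates the barrier $\sqrt{u}$, and the second converts the Bochner-type quantity into $(\Delta-\partial_t)\Gamma(\sqrt{u})$, precisely what the curvature hypothesis controls.

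First I would dispose of the easy regions. At $t=0$ one has $F(x,0)=-\sqrt{A}\,\sqrt{u_0}\le 0$. On $S_T^c$, where $\Delta\sqrt{u}\ge 0$, identity \eqref{id12} gives $\Delta u\ge 2\Gamma(\sqrt{u})\ge 0$, so $|\Delta u|=\Delta u\ge 2\Gamma(\sqrt{u})$ and hence $t\,\Gamma(\sqrt{u})-\frac{t}{2}|\Delta u|\le 0$; with $-(1+2Kt)\sqrt{A}\,\sqrt{u}\le 0$ this yields $F\le 0$ there. The substance lies on $S_T$, where $\Delta\sqrt{u}<0$. There I would expand $(\Delta-\partial_t)F$ using the two identities above, apply $CDE(\infty,-K)$ in the form $\widetilde{\Gamma}_2(\sqrt{u})\ge-K\Gamma(\sqrt{u})$, and collect terms. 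The $\Gamma(\sqrt{u})$ contributions assemble into $(1+2Kt)\Gamma(\sqrt{u})\left(\frac{\sqrt{A}}{\sqrt{u}}-1\right)$, which is nonnegative exactly because $u\le A$; this is the one place the boundedness assumption is indispensable. What then remains are the nonnegative pieces $2K\sqrt{A}\,\sqrt{u}$ and $\frac12|\Delta u|$, together with the contribution from differentiating $|\Delta u|$, namely $-\frac{t}{2}(\Delta-\partial_t)|\Delta u|$, for which Lemma \ref{zero} is the designed tool (this is why the differentiability hypothesis on $|\Delta u|$ along the zero set of $\Delta u$ is imposed). Once this last term is handled, $(\Delta-\partial_t)F\ge 0$ on $S_T$, and Proposition \ref{mp3} gives $F\le 0$; dividing by $t$ yields \eqref{Ham-12}.

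The step I expect to be the genuine obstacle is controlling $-\frac{t}{2}(\Delta-\partial_t)|\Delta u|$ on $S_T$. Lemma \ref{zero} only furnishes $(\Delta-\partial_t)|\Delta u|\ge 0$, so this contribution carries the \emph{unfavourable} sign for a naive conclusion that $(\Delta-\partial_t)F\ge 0$; on a manifold the analogous cross term is eliminated by the vanishing of the gradient of the test function at an interior extremum together with the chain rule, a mechanism with no literal graph counterpart. I would therefore try to settle it at the extremal point itself: at a maximum $(x^*,t^*)$ of $F$, positivity $F(x^*,t^*)>0$ forces $2\Gamma(\sqrt{u})>\Delta u$, i.e. $\Delta\sqrt{u}<0$, so $(x^*,t^*)\in S_T$, and I would use the spatial condition $\Delta F(x^*)\le 0$ to trade the offending $\Delta|\Delta u|$ against $\Delta\Gamma(\sqrt{u})$ and $\Delta\sqrt{u}$, aiming to \emph{absorb} the $|\Delta u|$ contribution rather than merely bound it through Lemma \ref{zero}. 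Making this absorption quantitative—showing that the favourable terms $\frac12|\Delta u|+2K\sqrt{A}\,\sqrt{u}+(1+2Kt)\Gamma(\sqrt{u})\left(\frac{\sqrt{A}}{\sqrt{u}}-1\right)$ dominate what survives—is the crux, and the place where the hypotheses $u\le A$ and the differentiability of $|\Delta u|$ must be exploited in tandem.
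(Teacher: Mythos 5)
Your setup coincides with the paper's proof up to an exact normalization: the paper takes $\varphi(t)=\frac{t}{1+2Kt}$ (so that $\varphi'\ge 0$ and $\varphi'+2K\varphi\le 1$) and applies Proposition \ref{mp3} to
$H=\varphi(t)\Gamma(\sqrt{u})-\frac{1}{2}\varphi(t)\left|\Delta u\right|-\sqrt{A}\sqrt{u}$,
and your $F$ is precisely $(1+2Kt)H$. The treatment of $t=0$ and of $S_T^c$ via \eqref{id12} is identical, and on $S_T$ your expansion is the paper's: the identity $(\Delta-\partial_t)\Gamma(\sqrt{u})=2\widetilde{\Gamma}_2(\sqrt{u})$, the condition $CDE(\infty,-K)$, the relation $(\Delta-\partial_t)\sqrt{u}=-\Gamma(\sqrt{u})/\sqrt{u}$ and $u\le A$ leave exactly one term unaccounted for, namely $-\frac{t}{2}(\Delta-\partial_t)|\Delta u|$ (in the paper's normalization $-\frac{\varphi}{2}(\Delta-\partial_t)|\Delta u|$). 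At that point the paper simply discards this term, asserting
\begin{equation*}
(\Delta-\partial_t)H\ \ge\ -\Gamma(\sqrt{u})+\frac{\varphi'(t)}{2}\left|\Delta u\right|+\frac{\sqrt{A}}{\sqrt{u}}\Gamma(\sqrt{u})\ \ge\ 0 \quad \mbox{on } S_T,
\end{equation*}
citing Lemma \ref{zero}, and then concludes by Proposition \ref{mp3}.

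So the step you single out as the obstacle is exactly the step at which the paper finishes, and your sign analysis is correct: Lemma \ref{zero} yields the subsolution property $(\Delta-\partial_t)|\Delta u|\ge 0$ (indeed $w=\Delta u$ is caloric and $\Delta|w|(x)\ge \mathrm{sgn}\big(w(x)\big)\Delta w(x)$ pointwise, with strict inequality whenever a neighbor carries the opposite sign of $w$), so the discarded term is $\le 0$ and cannot be dropped from a lower bound; nor is it supported on the zero set $\{\Delta u=0\}$, since $\Delta$ is nonlocal --- which is also why the manifold mechanism you mention has no graph analogue. Your proposed repair does not close the gap either: at a maximum of $F$ one only knows $(\Delta-\partial_t)F(x^*,t^*)\le 0$, which points the same way as the inequality you would need to contradict, and no a priori bound of the form $(\Delta-\partial_t)|\Delta u|\le \frac{1}{t}|\Delta u|+\cdots$ is available to make the absorption quantitative. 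As a proof of \eqref{Ham-12} your proposal is therefore incomplete --- but it is incomplete precisely where the paper's own argument is unjustified as written, since the citation of Lemma \ref{zero} there uses the opposite inequality to the one the lemma proves. The remark following the theorem is the natural fallback: where $\Delta u=0$ one has $\Gamma(\sqrt{u})\le D_{\mu}u\le D_{\mu}\sqrt{A}\sqrt{u}$, which (shrinking the exceptional set as permitted by Remarks \ref{rem1}) leads to the weaker constant $\frac{1}{t}+(2K)\vee D_{\mu}$ in place of $\frac{1+2Kt}{t}$; but even that variant leaves the same nonlocal term untreated off the zero set, so the difficulty you isolated is genuine and not merely presentational.
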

\begin{proof}
Denote $\varphi(t)=\frac{t}{1+2Kt}$, it is easy to see that $\varphi'(t)\ge0$ and \begin{equation*}\label{diff-4}\varphi'(t)+2K\varphi(t)\le 1.\end{equation*}

Denote $$
H=\varphi(t)\Gamma(\sqrt{u})-\frac12\varphi(t)\left|\Delta u\right|-\sqrt{A}\sqrt{u},
$$
for an arbitrary $T>0$, our goal is to show $H\le 0$ holds for all $x\in V, t\in [0,T]$. To this end, as in the proof of Theorem \ref{LY1}, we divide into two cases.
\begin{description}
\item{(a). For $(x,t)\in S_T^c$, i.e. $\Delta\sqrt{u}(x,t)\ge0.$} Applying \eqref{id12}, we have $\Gamma(\sqrt{u})\le \frac12\Delta u$. Consequencely $H\le 0$ holds for $(x,t)\in S_T^c$.

\item{(b). For $(x,t)\in S_T$, i.e. $\Delta\sqrt{u}(x,t)<0.$ or equivalently $\Delta u(x,t)<2\Gamma(\sqrt{u})(x,t)$.}
 Let $\Phi(t)=\varphi(t)\Gamma(\sqrt{u})$, we have, by applying the curvature dimension condition $CDE(\infty,-K)$,
\begin{align*}
(\Delta-\partial_t)\Phi(t)&=\varphi(t)\Delta\Gamma(\sqrt{u})-\varphi'(t)\Gamma(\sqrt{u})-2\varphi(t)\Gamma\left(\sqrt{u},\frac{\Delta u}{2\sqrt{u}}\right)\\
&=2\varphi(t)\widetilde{\Gamma}_2(\sqrt{u})-\varphi'(t)\Gamma(\sqrt{u})\\
&\ge -\left(\varphi'(t)+2K\varphi(t)\right)\Gamma(\sqrt{u})\\
&\ge -\Gamma(\sqrt{u}).
\end{align*}

Notice that, by \eqref{id12}, $$
(\Delta -\partial_t)\sqrt{u}=\frac{\Delta u-\partial_tu}{2\sqrt{u}}-\frac{\Gamma(\sqrt{u})}{\sqrt{u}}=-\frac{\Gamma(\sqrt{u})}{\sqrt{u}},$$
it follows, for $(x,t)\in S_T$,
\begin{align*}
(\Delta-\partial_t)H(x,t)\ge -\Gamma(\sqrt{u})+\frac{\varphi'(t)}{2}\left|\Delta u\right|+\frac{\sqrt{A}}{\sqrt{u}}\Gamma(\sqrt{u})\ge0,
\end{align*}
where we have used the Lemma \ref{zero}.
\end{description}
 For any $x\in V$, obviously $H(x,0)\le0$. Thanks to Proposition \ref{mp3}, we have $H(x,t)\le 0$ holds for any $x\in V, t\in [0,T]$, thus we complete the proof.
\end{proof}
\begin{remark}
Notice that for $(x,t)$ satisfying $\Delta u(x,t)=0$ for some positive function $u(x,t)$, then we have
$$
\Gamma(\sqrt{u})(x,t)\le \frac{1}{2\mu(x)}\sum_{y\sim x}w_{xy}(u(y)+u(x))=\frac{\deg(x)}{\mu(x)}u(x)\le D_{\mu}u(x).
$$
From the above proof, we see that: Let $G=(V,E)$ be a finite (or infinite)
 graph satisfying the curvature dimension condition $CDE(\infty,-K)$
 with $K\ge0$. Let $u$ be a positive solution to the heat equation
 \eqref{heat} on $G$, and $u\le A$ holds for some positive constant $A$.
 Then we have for all $t>0$,
\begin{equation*}
\Gamma(\sqrt{u})\le \frac12 \left|\Delta u \right|+\left(\frac{1}{t}+(2K)\vee D_{\mu}\right)\sqrt{A}\sqrt{u}.
\end{equation*}
\end{remark}

As an application of Theorem \ref{hamilton1}, we have:

\begin{corollary}\label{lv2}
Let $G=(V,E)$ be a finite (or infinite) graph satisfying the curvature dimension condition $CDE(\infty,0)$, and let $u$ be a positive solution to the equation $\Delta u=0$ on $G$ and $u\le A$ for some positive constant $A$. I.e. there is no bounded positive harmonic functions on $G$.\end{corollary}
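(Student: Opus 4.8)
The plan is to read this off Theorem \ref{hamilton1} exactly as Corollary \ref{lv1} was read off Theorem \ref{LY1}, specialized to the zero-curvature case. The conclusion to establish is the Liouville property: every bounded positive harmonic function $u$ on $G$ (a positive solution of $\Delta u=0$ with $u\le A$) is constant. The starting observation is that such a $u$, viewed as time-independent, is a genuine solution of the heat equation \eqref{heat}, since $\partial_t u=0=\Delta u$. Moreover, because $\Delta u(x,t)\equiv 0$, the map $t\mapsto|\Delta u|(x,t)\equiv 0$ is trivially differentiable, so the differentiability hypothesis required by Theorem \ref{hamilton1} is automatically met, and $u\le A$ supplies the boundedness hypothesis.

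First I would apply Theorem \ref{hamilton1} under $CDE(\infty,0)$, i.e. with $K=0$. Its conclusion \eqref{Ham-12} then reads
\begin{equation*}
\Gamma(\sqrt{u})\le \frac12\left|\Delta u\right|+\frac{\sqrt{A}}{t}\sqrt{u},\qquad t>0 .
\end{equation*}
Since $\Delta u\equiv 0$, the first term on the right vanishes, leaving $\Gamma(\sqrt{u})(x)\le \frac{\sqrt{A}}{t}\sqrt{u(x)}$ for every vertex $x$ and every $t>0$. For each fixed $x$ the factor $\sqrt{u(x)}\le\sqrt{A}$ is a finite constant, so letting $t\to\infty$ forces $\Gamma(\sqrt{u})(x)=0$. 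Recalling that
\begin{equation*}
\Gamma(\sqrt{u})(x)=\frac{1}{\mu(x)}\sum_{y\sim x}\omega_{xy}\left(\sqrt{u(y)}-\sqrt{u(x)}\right)^2\ge 0,
\end{equation*}
and that $\omega_{xy}\ge\omega_{min}>0$, each summand must vanish; hence $u(y)=u(x)$ for every $y\sim x$. Connectivity of $G$ then propagates this equality across all vertices, so $u$ is constant.

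I do not anticipate a genuine obstacle here, as the argument is a one-line specialization of Theorem \ref{hamilton1} followed by sending $t\to\infty$; the only points deserving a remark are the two routine verifications above, namely that a static harmonic function is an admissible solution of \eqref{heat} and that its $|\Delta u|$ is differentiable in $t$ (both immediate since $\Delta u\equiv 0$). The mild subtlety worth flagging is the implicit use of connectivity of $G$ to pass from local constancy to global constancy, exactly as in the proof of Corollary \ref{lv1}.
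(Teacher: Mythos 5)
Your proposal is correct and follows essentially the same route as the paper's own proof: apply Theorem \ref{hamilton1} with $K=0$ to the time-independent solution, note $\Delta u\equiv 0$ kills the first term in \eqref{Ham-12}, let $t\to\infty$ to get $\Gamma(\sqrt{u})\equiv 0$, and conclude constancy. The extra verifications you flag (admissibility of a static solution, trivial differentiability of $|\Delta u|$, connectivity) are left implicit in the paper but are exactly right.
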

\begin{proof}
Applying \eqref{Ham-12}, we have $\Gamma(\sqrt{u})(x)=0$ for any $x\in V$, hence $u$ must be a constant.
\end{proof}
\begin{remark}
The  curvature condition $CDE(\infty,0)$ in  Corollary \ref{lv2} is weaker than the one ($CDE(n,0)$) in Corollary \ref{lv1}. There does exist some graphs $G$ satisfies $CDE(\infty,0)$ but does not satisfies $CDE(n,0)$. For example, in \cite{BHLLMY}, they show that for $d-$regular Ricci-flat graph with   weak consistent weighting, the Laplacian with $\mu(x)\equiv \mu$, $G$ satisfies $CDE(\infty,0)$. Furthermore, if the weighting of $G$ is consistent, then $G$ satisfies $CDE(d,0)$.
\end{remark}

\section{Harnack inequalities and heat kernel estimates}\label{sec-Harn}

In this section, we will derive Harnack-type inequalities as consequences of the above gradient estimates. The proof is following the ones in \cite{LY,BHLLMY}.

Before we state the main result in this section, we need one simple lemma, which is a generalization of Lemma 5.3 in \cite{BHLLMY}.
\begin{lemma}\label{lem-1}
For any functions $\psi,\alpha:[T_1,T_2]\to \mathbb{R}$, we have
\begin{equation*}
\min_{s\in[T_1,T_2]}\left(\psi(s)-\frac{1}{\alpha(s)}\int_s^{T_2}\psi^2(t)dt\right)\le \frac{2\int_{T_1}^{T_2}ds\int_{T_1}^s\alpha(u)du}{(T_2-T_1)^3}.
\end{equation*}
\end{lemma}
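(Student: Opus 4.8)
The plan is to convert the pointwise meaning of the minimum into a single integral inequality, then remove the unknown $\psi$ by an elementary quadratic estimate, so that everything reduces to a quantity depending only on $\alpha$ and on a weight one is free to choose; the whole difficulty is then concentrated in that choice. Write $m=\min_{s\in[T_1,T_2]}g(s)$ with $g(s)=\psi(s)-\frac1{\alpha(s)}\int_s^{T_2}\psi^2$. If $m\le0$ there is nothing to prove (the right-hand side is nonnegative once $\alpha\ge0$), so I may assume $m>0$ and $\alpha>0$. The engine of the argument is the defining property $g(s)\ge m$ for every $s$, which rearranges to
\[
\int_s^{T_2}\psi^2(t)\,dt\le\alpha(s)\bigl(\psi(s)-m\bigr),\qquad s\in[T_1,T_2].
\]

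Next I would fix a nonnegative weight $w$ on $[T_1,T_2]$, multiply by $w(s)$, integrate in $s$, and interchange the order of integration in the resulting double integral by Fubini. With $W(t)=\int_{T_1}^t w$ this yields
\[
\int_{T_1}^{T_2}W(t)\psi^2(t)\,dt+m\int_{T_1}^{T_2}w\alpha\le\int_{T_1}^{T_2}w(s)\alpha(s)\psi(s)\,ds .
\]
Now $\psi$ is eliminated by completing the square pointwise, $W\psi^2-w\alpha\psi\ge-\frac{w^2\alpha^2}{4W}$, which gives the weight-dependent bound
\[
m\le\frac14\,\frac{\displaystyle\int_{T_1}^{T_2}\frac{w^2(t)\alpha^2(t)}{W(t)}\,dt}{\displaystyle\int_{T_1}^{T_2}w(s)\alpha(s)\,ds},\qquad W(t)=\int_{T_1}^t w,
\]
valid for every admissible $w\ge0$. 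This is exactly the mechanism behind Lemma 5.3 of \cite{BHLLMY}, which corresponds to constant $\alpha$.

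The last step, which I expect to be the real obstacle, is to choose $w$ so that this right-hand side collapses to $\frac{2\int_{T_1}^{T_2}\int_{T_1}^s\alpha}{(T_2-T_1)^3}$. In the constant-coefficient case the right choice is the linear weight $w(t)\propto t-T_1$, so that $W\propto(t-T_1)^2$, and a direct computation returns $\frac{c}{T_2-T_1}$, which is precisely the claimed value and recovers the graph Harnack estimate of \cite{BHLLMY}. For genuinely time-dependent $\alpha$ one must take a weight adapted to $\alpha$ (the minimizer of the Rayleigh-type quotient above), and verifying that this adapted choice — or, more robustly, a single explicit comparison weight together with a Cauchy–Schwarz estimate — reproduces the stated double integral is the technical heart of the lemma.

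An alternative route that also handles general $\alpha$ is to set $G(s)=\int_s^{T_2}\psi^2$, note that the engine inequality becomes the Riccati-type differential inequality $-G'\ge\bigl(m+G/\alpha\bigr)^2$ together with $G(T_2)=0$ and $G\ge0$, and then run a comparison/blow-up argument: since $G$ must stay finite on all of $[T_1,T_2]$, the solution of the associated equality cannot blow up before $s=T_1$, and the blow-up time equals the interval length exactly at the critical value $m=c/(T_2-T_1)$ in the constant case. Either way, the delicate point is matching the constant in the estimate to the clean double-integral expression on the right.
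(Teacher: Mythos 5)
Your scaffolding coincides with the paper's: bound the minimum by a weighted average, interchange the order of integration by Fubini, and eliminate $\psi$ by completing the square. Your intermediate weight-dependent bound
\begin{equation*}
m\le\frac14\,\frac{\int_{T_1}^{T_2}w^2(t)\alpha^2(t)/W(t)\,dt}{\int_{T_1}^{T_2}w(s)\alpha(s)\,ds},
\qquad W(t)=\int_{T_1}^{t}w(s)\,ds,
\end{equation*}
is correct for every admissible $w\ge0$, and it is exactly equivalent to the paper's setup (multiplying $g(s)$ by a weight $\phi(s)$ corresponds to your $w=\phi/\alpha$). But you stop precisely where the lemma's content lies, and you say so yourself: choosing the weight is ``the real obstacle'' and ``the technical heart''. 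This is a genuine gap, because the missing choice is not a variational problem to be solved abstractly --- it is explicit and closed-form. Set $\phi(t)=2\int_{T_1}^{t}\frac{du}{\alpha(u)}$ and take $w=\phi/\alpha$. Since $\phi(T_1)=0$ and $(\phi^2)'=2\phi\phi'=4\phi/\alpha$, one has the self-consistency identity $W(t)=\int_{T_1}^{t}\frac{\phi(s)}{\alpha(s)}ds=\frac14\phi^2(t)$, so the integrand $\frac{w^2\alpha^2}{4W}=\frac{\phi^2}{\phi^2}\equiv1$ and your quotient collapses to $m\le(T_2-T_1)\big/\int_{T_1}^{T_2}\phi$. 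This is exactly the paper's mechanism; no Rayleigh-quotient minimization is needed.

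Even granting that choice, a second step is still missing: $(T_2-T_1)\big/\int_{T_1}^{T_2}\phi
=(T_2-T_1)\big/\bigl(2\int_{T_1}^{T_2}ds\int_{T_1}^{s}\frac{du}{\alpha(u)}\bigr)$ is a harmonic-mean-type quantity in $\alpha$, not the stated arithmetic double integral. The paper bridges the two by Cauchy--Schwarz on the triangle $\{T_1\le u\le s\le T_2\}$, whose area is $(T_2-T_1)^2/2$:
\begin{equation*}
\frac{(T_2-T_1)^4}{4}\le\left(\int_{T_1}^{T_2}ds\int_{T_1}^{s}\alpha(u)\,du\right)
\left(\int_{T_1}^{T_2}ds\int_{T_1}^{s}\frac{du}{\alpha(u)}\right),
\end{equation*}
which converts the bound into the claimed $\frac{2\int_{T_1}^{T_2}ds\int_{T_1}^{s}\alpha(u)\,du}{(T_2-T_1)^3}$. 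You gesture at ``a Cauchy--Schwarz estimate'' as one possible route but never execute it, and your constant-$\alpha$ computation and Riccati comparison sketch only confirm the special case, not the general statement. So the approach is the right one, but the proof is incomplete at both decisive points. One housekeeping remark: both your argument and the paper's implicitly require $\alpha>0$ on $[T_1,T_2]$ (your reduction to $m>0$ uses it as well); the lemma as stated for ``any functions'' is too generous and the positivity assumption should be made explicit.
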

\begin{proof}
The method is applying Cauthy-Schwartz inequality and the idea that we can bound the minimum by an averaged sum. Let $\phi(s)=2\int_{T_1}^t\frac{1}{\alpha(s)}ds$, applying the Fubini theorem, on has $\phi^2(t)=4\int_{T_1}^t\frac{\phi(s)}{\alpha(s)}ds$ for any $t\in[T_1,T_2]$.  Applying the Fubini theorem again, one obtains
\begin{align*}
\min_{s\in[T_1,T_2]}\left(\psi(s)-\frac{1}{\alpha(s)}\int_s^{T_2}\psi^2(t)dt\right)&\le \frac{1}{\int_{T_1}^{T_2}\phi(s)ds}\int_{T_1}^{T_2}\phi(s)\left(\psi(s)-\frac{1}{\alpha(s)}\int_{s}^{T_2}\psi^2(t) dt\right)ds\\
&=\frac{1}{\int_{T_1}^{T_2}\phi(s)ds}\int_{T_1}^{T_2}\psi(t)\phi(t)-
\psi^2(t)\int_{T_1}^{s}\frac{\phi(s)}{\alpha(s)}dsdt\\
&\le \frac{1}{\int_{T_1}^{T_2}\phi(s)ds}\int_{T_1}^{T_2}\frac{\phi^2(t)}{4\int_{T_1}^{t}\frac{\phi(s)}{\alpha(s)}ds}dt\\
&=\frac{T_2-T_1}{\int_{T_1}^{T_2}\phi(s)ds}.
\end{align*}

Thanks to Cauchy-Schwartz inequality, we obtain

\begin{equation*}
\frac{1}{\int_{T_1}^{T_2}ds\int_{T_1}^s\frac{1}{\alpha(u)}du}\le \frac{\int_{T_1}^{T_2}ds\int_{T_1}^s\alpha(u)du}{\frac{(T_2-T_1)^4}{4}},
\end{equation*}
 the desired result follows.
\end{proof}

Now let us state the main result in this section.
\begin{theorem}\label{hak}
Let $G=(V,E)$ be a finite (or infinite) graph with measure bound $\mu_{max}$, and suppose that a function $f:V\times \mathbb{R}\to \mathbb{R}$ satisfies
$$
\frac{\Gamma(f)}{f^2}-\alpha(t)\frac{\partial_t f}{f}\le \psi(t)
$$
whenever $x\in B(x_o,R)$ for $x_o\in V$ along with some $R>0$, some functions $\alpha(t)\ge1, \psi$. Then for  $T_1<T_2$ and $x,y\in V$, we have
$$
f(x,T_1)\le f(y,T_2)\exp{\left(\int_{T_1}^{T_2}\frac{\psi(t)}{\alpha(t)}dt+\rho_{\mu_{max},\alpha,\omega_{min}}(x,y,T_1,T_2)\right)}
$$
where $$
\rho_{\mu_{max},\alpha,\omega_{min}}(x,y,T_1,T_2)=\inf\left\{\sum_{i=0}^{k-1}\frac{4\mu_{max}k^3}{\omega_{min}(T_2-T_1)^3}
\int_{t_{i}}^{t_{i+1}}ds\int_{t_{i}}^s \alpha(u)du\right\}
$$
the infinium is taken over the set of all paths $P=p_0p_1\cdots p_k$ so that $x_0=x,x_k=y$ and having all $p_i\in B(x_o,R)$, and the times $T_1=t_0,t_1,\cdots t_k=T_2$ evenly divide the interval $[T_1,T_2]$.
\end{theorem}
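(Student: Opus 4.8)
The plan is to integrate the hypothesis along a space--time path and to optimise the location of the spatial jumps by means of Lemma \ref{lem-1}. Since $\alpha(t)\ge 1>0$, dividing the hypothesis by $\alpha(t)$ rewrites it as a lower bound on the time derivative of $\log f$, namely $-\partial_t\log f(x,t)\le \frac{\psi(t)}{\alpha(t)}-\frac{\Gamma(f)(x,t)}{\alpha(t)f^2(x,t)}$ for every $x\in B(x_o,R)$ (positivity of $f$ is used throughout so that $\log f$ makes sense). Fix an admissible path $P=p_0p_1\cdots p_k$ with $p_0=x$, $p_k=y$, all $p_i\in B(x_o,R)$ and consecutive $p_i\sim p_{i+1}$, together with the even partition $T_1=t_0<\cdots<t_k=T_2$, $t_{i+1}-t_i=\delta:=(T_2-T_1)/k$. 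Telescoping, $\log f(x,T_1)-\log f(y,T_2)=\sum_{i=0}^{k-1}\big(\log f(p_i,t_i)-\log f(p_{i+1},t_{i+1})\big)$, so it suffices to bound each summand and then take the infimum over $P$.

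For a fixed segment I would introduce a free jump time $s\in[t_i,t_{i+1}]$ and decompose $\log f(p_i,t_i)-\log f(p_{i+1},t_{i+1})$ into the time evolution at $p_i$ on $[t_i,s]$, the spatial jump from $p_i$ to $p_{i+1}$ at time $s$, and the time evolution at $p_{i+1}$ on $[s,t_{i+1}]$. Integrating the rearranged inequality on each time piece produces $\int_{t_i}^{t_{i+1}}\frac{\psi(t)}{\alpha(t)}\,dt$ together with the nonpositive gain $-\int_s^{t_{i+1}}\frac{\Gamma(f)(p_{i+1},t)}{\alpha(t)f^2(p_{i+1},t)}\,dt$ (the analogous gain at $p_i$ is simply discarded, which preserves the upper bound). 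The spatial jump is controlled through the graph definition of $\Gamma$: since $p_i\sim p_{i+1}$, $\omega_{min}\le\omega_{p_ip_{i+1}}$ and $\mu(p_{i+1})\le\mu_{max}$, one has $(f(p_i,s)-f(p_{i+1},s))^2\le \frac{2\mu_{max}}{\omega_{min}}\Gamma(f)(p_{i+1},s)$, whence, using $\log r\le r-1$, $\log f(p_i,s)-\log f(p_{i+1},s)\le \sqrt{\frac{2\mu_{max}}{\omega_{min}}}\,\frac{\sqrt{\Gamma(f)(p_{i+1},s)}}{f(p_{i+1},s)}$. This is exactly where the factor $2\mu_{max}/\omega_{min}$ enters.

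Setting $\Psi(t):=\sqrt{\Gamma(f)(p_{i+1},t)}/f(p_{i+1},t)$, the segment bound takes the shape $\int_{t_i}^{t_{i+1}}\frac{\psi}{\alpha}+\big(\text{jump}(s)-\text{gain}(s)\big)$, where, up to the constant $2\mu_{max}/\omega_{min}$, the bracket is of the form $\Psi(s)-\frac{1}{\alpha(s)}\int_s^{t_{i+1}}\Psi^2$ appearing in Lemma \ref{lem-1}. Since the estimate holds for every $s$, I would minimise over $s\in[t_i,t_{i+1}]$ and invoke Lemma \ref{lem-1} on $[t_i,t_{i+1}]$ (of length $\delta$), obtaining the contribution $\frac{2\mu_{max}}{\omega_{min}}\cdot\frac{2\int_{t_i}^{t_{i+1}}ds\int_{t_i}^s\alpha}{\delta^3}=\frac{4\mu_{max}k^3}{\omega_{min}(T_2-T_1)^3}\int_{t_i}^{t_{i+1}}ds\int_{t_i}^s\alpha(u)\,du$. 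Summing over $i$ gives $\int_{T_1}^{T_2}\frac{\psi}{\alpha}$ plus the bracketed sum defining $\rho$; exponentiating and taking the infimum over admissible paths $P$ yields the claim.

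The main obstacle is the precise matching of the time-varying weight $\alpha(t)$ with the form demanded by Lemma \ref{lem-1}: the gain produced by integration carries $1/\alpha(t)$ \emph{inside} the integral, whereas the lemma expects $1/\alpha(s)$ pulled out at the jump time $s$. When $\alpha$ is constant (the setting of \cite{BHLLMY}) the two coincide and the constants line up verbatim; for general $\alpha(t)$ one must absorb the discrepancy, for instance by using monotonicity of $\alpha$ or its near-constancy on the short subinterval $[t_i,t_{i+1}]$, and this is the step requiring the most care. A secondary, routine point is to justify differentiating $\log f$ in $t$ and to confirm that discarding the nonpositive gain at $p_i$ only weakens the bound in the correct direction.
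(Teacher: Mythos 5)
Your proposal is, in substance, the paper's own proof: the paper likewise telescopes along a path with an even time partition, introduces a free jump time $s$ on each segment, integrates the rearranged hypothesis $-\partial_t\log f\le \frac{\psi(t)}{\alpha(t)}-\frac{1}{\alpha(t)}\frac{\Gamma(f)}{f^2}$, throws away the gain at the first vertex, controls the spatial jump with $\log r\le r-1$ together with $\Gamma(f)(y,t)\ge \frac{\omega_{min}}{2\mu_{max}}\big(f(x,t)-f(y,t)\big)^2$, and feeds the resulting bracket into Lemma \ref{lem-1}; your cosmetic variant of bounding the jump by $\sqrt{2\mu_{max}/\omega_{min}}\,\sqrt{\Gamma(f)}/f$ instead of by the signed difference quotient $\frac{f(x,s)-f(y,s)}{f(y,s)}$ changes nothing after rescaling, and the constants come out identically.

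Concerning the step you single out as the main obstacle: you are right that the integrated gain carries $1/\alpha(t)$ \emph{inside} the integral while Lemma \ref{lem-1} is stated, and proved (via the weight $\phi(t)=2\int_{T_1}^{t}\frac{ds}{\alpha(s)}$ and the identity $\phi^2(t)=4\int_{T_1}^{t}\frac{\phi(s)}{\alpha(s)}ds$), with $\frac{1}{\alpha(s)}$ pulled \emph{outside} at the jump time. But note that the paper's proof applies the lemma to the inside form without any comment, so on this point your write-up is exactly at the paper's level of rigor — indeed you spotted a wrinkle the paper glosses over. Be warned, though, that your first suggested repair goes the wrong way: the relevant weights, e.g.\ $\alpha(t)=1+\frac{2Kt}{1+\gamma}$ or $\alpha(t)=e^{\frac{2Kt}{1+\beta}}$, are \emph{increasing}, so for $t\ge s$ one has $\frac{1}{\alpha(t)}\le\frac{1}{\alpha(s)}$, and replacing the negatively signed gain $-\int_s^{T_2}\frac{\psi^2(t)}{\alpha(t)}dt$ by $-\frac{1}{\alpha(s)}\int_s^{T_2}\psi^2(t)dt$ is not an upper bound; monotonicity would help only for decreasing $\alpha$. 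The clean fix is to reprove the lemma directly in the inside form: averaging against the weight $\phi=\Phi'$ chosen so that $\alpha\phi^2=4\Phi$, i.e.\ $\sqrt{\Phi(t)}=\int_{T_1}^{t}\alpha(u)^{-1/2}du$, makes the pointwise quadratic estimate $\phi\psi-\frac{\Phi}{\alpha}\psi^2\le\frac{\alpha\phi^2}{4\Phi}=1$ exact and yields
\begin{equation*}
\min_{s\in[T_1,T_2]}\left(\psi(s)-\int_s^{T_2}\frac{\psi^2(t)}{\alpha(t)}dt\right)\le \frac{T_2-T_1}{\left(\int_{T_1}^{T_2}\alpha(u)^{-1/2}du\right)^{2}}\le \frac{\int_{T_1}^{T_2}\alpha(u)du}{(T_2-T_1)^2},
\end{equation*}
the last step by Cauchy--Schwarz; this is of the same quality as (though not literally identical to) the double-integral bound appearing in $\rho_{\mu_{max},\alpha,\omega_{min}}$, and it suffices for all the Harnack and heat-kernel consequences drawn from the theorem.
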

\begin{proof} The proof here follows exactly the one of Theorem 5.1 in \cite{BHLLMY}, we write it for the readers' convenience.  Let us first assume that $x\sim y$. Then for any $s\in [T_1,T_2]$ we can write
\begin{align*}
\log f(x,T_1)-\log f(y,T_2)=-\int_{T_1}^s\frac{\partial}{\partial t}\log f(x,t)dt+\log\frac{f(x,s)}{f(y,s)}-\int_s^{T_2}\frac{\partial}{\partial t}\log f(y,t)dt.
\end{align*}
Applying the assumption
$$
-\frac{\partial}{\partial t}\log f\le -\frac{1}{\alpha(t)}\frac{\Gamma(f)}{f^2}+\frac{\psi(t)}{\alpha(t)},
$$
we have
\begin{align*}
\log f(x,T_1)-\log f(y,T_2)&\le\int_{T_1}^{T_2}\frac{\psi(t)}{\alpha(t)}dt
-\int_{T_1}^s\frac{1}{\alpha(t)}\frac{\Gamma(f)}{f^2}(x,t)dt
-\int_{s}^{T_2}\frac{1}{\alpha(t)}\frac{\Gamma(f)}{f^2}(y,t)dt+\log\frac{f(x,s)}{f(y,s)}\\
&\le \int_{T_1}^{T_2}\frac{\psi(t)}{\alpha(t)}dt-\frac{\omega_{min}}{2\mu_{max}}
\int_{s}^{T_2}\frac{1}{\alpha(t)}\left|\frac{f(x,t)-f(y,t)}{f(y,t)}\right|^2dt+\frac{f(x,s)-f(y,s)}{f(y,s)},
\end{align*}
where in the second step we threw way the $\int_{T_1}^{s}$ term, and used that $\Gamma(f)(y,t)\ge\frac12\omega_{min}(f(y,t)-f(x,t))^2/\mu_{max}$ as well as the fact that $\log r\le r-1$ for any $r>0$.
 Using Lemma \ref{lem-1}, we have
 \begin{align*}
 \log f(x,T_1)-\log f(y,T_2)\le \int_{T_1}^{T_2}\frac{\psi(t)}{\alpha(t)}dt+ \frac{4\mu_{max}}{\omega_{min}(T_2-T_1)^3}
 \int_{T_1}^{T_2}ds\int_{T_1}^s \alpha(u)du.
 \end{align*}
 For $x,y\in B(x_o,R)$, there exists a path $P=x,x_1,\cdots,x_k=y$ such that $x:=x_0\sim x_1$ and $x_i\sim x_{i+1}$ for $1\le i\le k-1$ as well as  $x_i\in B(x_o,R)$. Let $T_1=t_0<\cdots<t_{k}=T_2$ be a subdivision of the time interval $[T_1,T_2]$ into $k$ equal parts.  We have
  \begin{align*}
 \log f(x,T_1)-\log f(y,T_2)&=\sum_{i=0}^{k-1}\log f(x_i,t_i)-\log f(x_{i+1},t_{i+1})\\
  &\le \int_{T_1}^{T_2}\frac{\psi(t)}{\alpha(t)}dt+ \sum_{i=0}^{k-1}\frac{4\mu_{max}k^3}{\omega_{min}(T_2-T_1)^3}
\int_{t_{i}}^{t_{i+1}}ds\int_{t_{i}}^s \alpha(u)du.
 \end{align*}
Minimizing all the path, we complete the proof.
\end{proof}
\begin{remarks}\label{rem2}
(1). Take $\alpha(t)=1+\frac{2Kt}{1+\gamma}$, for some $\gamma\in (1,3)$, we have
\begin{equation*}
\rho_{\mu_{max},\alpha,\omega_{min}}(x,y,T_1,T_2)\le \frac{2\mu_{max}d^2(x,y)}{\omega_{min}(T_2-T_1)}\left(1+\frac{K(T_2+T_1)}{1+\gamma}\right).
\end{equation*}

(2). Take $\alpha(t)=1+\frac{\sinh(Kt)\cosh(Kt)-Kt}{\sinh^2(Kt)}$,  we have
 \begin{equation*}
\rho_{\mu_{max},\alpha,\omega_{min}}(x,y,T_1,T_2)\le \frac{2\mu_{max}d^2(x,y)}{\omega_{min}(T_2-T_1)}\left(1+\coth(KT_1)\right).
\end{equation*}
Moreover, if $T_1<T_2< T_1\big(d(x,y)+1\big)$, we have
\begin{equation*}
\rho_{\mu_{max},\alpha,\omega_{min}}(x,y,T_1,T_2)\le \frac{2\mu_{max}d^2(x,y)}{\omega_{min}(T_2-T_1)}\left(1+\frac{1}{K}\ln \frac{\sinh(KT_2)}{\sinh\left(KT_1-\frac{K(T_2-T_1)}{d(x,y)}\right)}\right).
\end{equation*}

If  $0<KT_2<\delta$ for some positive constant $0<\delta=\delta(T_1,T_2,K)<1$, we have

 \begin{equation*}
\rho_{\mu_{max},\alpha,\omega_{min}}(x,y,T_1,T_2)\le \frac{2(1+KT_2)\mu_{max}d^2(x,y)}{\omega_{min}(T_2-T_1)}.
\end{equation*}

(3). Take $\alpha(t)=e^{\frac{2Kt}{1+\beta}}$ with $\beta\in (1,2]$ and $K\ge0$ for $t<\frac{(1+\beta)\log (1+\beta)}{2K}$, we have for $0<T_1<T_2<\frac{(1+\beta)\log (1+\beta)}{2K}$,
\begin{equation*}
\rho_{\mu_{max},\alpha,\omega_{min}}(x,y,T_1,T_2)\le \frac{2\mu_{max}d^2(x,y)}{\omega_{min}(T_2-T_1)}
\left(1+\frac{1+\beta}{2K}e^{\frac{T_2-T_1}{d(x,y)}}\left(e^{\frac{2K}{1+\beta}T_2}
-e^{\frac{2K}{1+\beta}T_1}\right)-(T_2-T_1)\right).
\end{equation*}

We shall give the proof in the Appendix.
\end{remarks}

As an application of the above theorem, we have the Harnack inequalities as follows.
\begin{corollary}\label{harnack}
Suppose $G=(V,E)$ is a finite or infinite unweighted graph satisfying $CDE(n,-K)$ with $K\ge0$ and $\mu(x)=\deg(x)$ for all $x\in V$. Denote by $D$  the maximum degree of a vertex in G. If $u$ is a positive solution to the heat equations on $G$, then for $0<T_1<T_2$,
$$
u(x,T_1)\le u(y,T_2)\exp{\left(2\int_{T_1}^{T_2}\frac{\psi(t)}{\alpha(t)}dt+2\rho_{\mu_{max},\alpha,\omega_{min}}(x,y,T_1,T_2)\right)}
$$
holds for $x,y\in V$.
In particular, for $0<T_1<T_2<\infty$ and $1<\gamma<3$,
\begin{equation*}
u(x,T_1)\le u(y,T_2)\left(\frac{T_2}{T_1}\right)^{\frac{n\gamma^2}{4(\gamma-1)}}\left(\frac{1+\frac{2K}{1+\gamma}T_2}{1+\frac{2K}{1+\gamma}T_1}\right)^{-\frac{n}{4(\gamma-1)}}
\exp\left(\frac{4Dd^2(x,y)}{T_2-T_1}\left(1+\frac{K(T_2+T_1)}{1+\gamma}\right)+\frac{nK}{2}(T_2-T_1)\right),
\end{equation*}

\begin{equation*}
u(x,T_1)\le u(y,T_2)\left(\frac{e^{2KT_2}-2KT_2-1}{e^{2KT_1}-2KT_1-1}\right)^{\frac{n}2}
\exp\left(\frac{4Dd^2(x,y)}{T_2-T_1}\left(1+\coth(KT_1)\right)\right),
\end{equation*}
hold for $T_2>T_1>0, x,y\in V$.
Moreover, if $KT_2<\delta$ for some positive $0<\delta<1$, we have
\begin{equation*}
u(x,T_1)\le u(y,T_2)\left(\frac{e^{2KT_2}-2KT_2-1}{e^{2KT_1}-2KT_1-1}\right)^{\frac{n}2}
\exp\left(\frac{4Dd^2(x,y)}{T_2-T_1}\left(1+KT_2\right)\right).
\end{equation*}

 If  we further assume  that $K=0$, i.e. $G=(V,E)$ satisfies $CDE(n,0)$.  We have
\begin{equation*}
u(x,T_1)\le u(y,T_2)\left(\frac{T_2}{T_1}\right)^{\frac{n}2}
\exp\left(\frac{4Dd^2(x,y)}{T_2-T_1}\right).
\end{equation*}
\end{corollary}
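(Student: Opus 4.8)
The plan is to apply Theorem \ref{hak} to the square root $f=\sqrt{u}$ and then square the resulting inequality. Since $u$ is a positive solution of \eqref{heat}, $\sqrt{u}>0$, and the global Li--Yau estimate \eqref{LY-1} of Theorem \ref{LY1} reads
\[
\frac{\Gamma(\sqrt{u})}{u}-\alpha(t)\frac{\partial_t(\sqrt{u})}{\sqrt{u}}\le\frac12\varphi(t),
\]
which is precisely the hypothesis of Theorem \ref{hak} with $f=\sqrt{u}$ and $\psi(t)=\tfrac12\varphi(t)$; because \eqref{LY-1} holds at every vertex we may take the ball $B(x_o,R)$ to be all of $V$. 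For an unweighted graph with $\mu(x)=\deg(x)$ we have $\omega_{min}=1$ and $\mu_{max}=D$. Theorem \ref{hak} then gives
\[
\sqrt{u}(x,T_1)\le\sqrt{u}(y,T_2)\exp\!\left(\int_{T_1}^{T_2}\frac{\psi(t)}{\alpha(t)}\,dt+\rho_{\mu_{max},\alpha,\omega_{min}}(x,y,T_1,T_2)\right),
\]
and squaring produces the general inequality stated in the corollary, with the factors $2\int_{T_1}^{T_2}\psi/\alpha$ and $2\rho$.

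It then remains to make the two exponents explicit for each choice of $a(t)$. Using $\psi=\tfrac12\varphi$ we have $2\int_{T_1}^{T_2}\psi/\alpha=\int_{T_1}^{T_2}\varphi/\alpha$, so the first task is to evaluate $\int_{T_1}^{T_2}\varphi(t)/\alpha(t)\,dt$ with $\alpha,\varphi$ taken from \eqref{notation1} for the relevant $a$. The key point is that in each case the integrand collapses to a closed form. For $a(t)=t^{\gamma}$ ($1<\gamma<3$) one decomposes $\varphi/\alpha$ by partial fractions in $t$ and $1+\tfrac{2K}{1+\gamma}t$; the constant part integrates to $\tfrac{nK}{2}(T_2-T_1)$, and the remaining pieces combine, after the cancellation $\tfrac{\gamma+1}{4}-\tfrac{\gamma^2}{4(\gamma-1)}=-\tfrac{1}{4(\gamma-1)}$, into the factors $(T_2/T_1)^{n\gamma^2/4(\gamma-1)}$ and $(\,\cdot\,)^{-n/4(\gamma-1)}$. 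For $a(t)=\sinh^2(Kt)$ one checks that $\varphi/\alpha=\tfrac{nK(e^{2Kt}-1)}{e^{2Kt}-2Kt-1}=\tfrac{n}{2}\tfrac{d}{dt}\ln\!\big(e^{2Kt}-2Kt-1\big)$ is a logarithmic derivative, so the integral is $\tfrac{n}{2}\ln\frac{e^{2KT_2}-2KT_2-1}{e^{2KT_1}-2KT_1-1}$, giving the stated power. The cases $K=0$ and $KT_2<\delta$ then follow by specialization and simplification of these expressions.

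For the second exponent $2\rho$, I would invoke the explicit bounds recorded in Remark \ref{rem2}, where the infimum defining $\rho$ is estimated by selecting a geodesic path of length $d(x,y)$ and subdividing $[T_1,T_2]$ evenly. With $\mu_{max}=D$ and $\omega_{min}=1$ these become $2\rho\le\tfrac{4Dd^2(x,y)}{T_2-T_1}\big(1+\tfrac{K(T_2+T_1)}{1+\gamma}\big)$ for $a=t^\gamma$ and $2\rho\le\tfrac{4Dd^2(x,y)}{T_2-T_1}\big(1+\coth(KT_1)\big)$ for $a=\sinh^2(Kt)$, matching the exponential factors in each stated inequality; the $KT_2<\delta$ version uses the refinement in Remark \ref{rem2}(2).

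I expect the main obstacle to be purely computational: carrying out the closed-form evaluation of $\int_{T_1}^{T_2}\varphi/\alpha$ for each $a(t)$---spotting the logarithmic-derivative structure and the algebraic cancellations---together with the path-optimization estimates for $\rho$. The conceptual step, namely reducing everything to Theorem \ref{hak} applied to $\sqrt{u}$ and then squaring, is immediate.
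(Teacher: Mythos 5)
Your proposal is correct and is precisely the paper's proof, which the paper states in one line as combining Theorem \ref{LY1}, Theorem \ref{hak} and Remarks \ref{rem2}: applying Theorem \ref{hak} to $f=\sqrt{u}$ with $\psi=\tfrac12\varphi$ and then squaring is exactly what produces the factors $2\int_{T_1}^{T_2}\psi/\alpha\,dt$ and $2\rho$, and your closed-form evaluations check out (for $a=t^{\gamma}$ the partial fractions give $\tfrac{nK}{2}(T_2-T_1)+\tfrac{n\gamma^2}{4(\gamma-1)}\ln\tfrac{T_2}{T_1}-\tfrac{n}{4(\gamma-1)}\ln\bigl(\tfrac{1+\frac{2K}{1+\gamma}T_2}{1+\frac{2K}{1+\gamma}T_1}\bigr)$ via the cancellation you note, and for $a=\sinh^2(Kt)$ indeed $\varphi/\alpha=\tfrac{n}{2}\tfrac{d}{dt}\ln\bigl(e^{2Kt}-2Kt-1\bigr)$, while the $\rho$ bounds are exactly Remarks \ref{rem2} with $\mu_{max}=D$, $\omega_{min}=1$). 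One caution: your final ``specialization'' at $K=0$ actually yields the exponent $n$ on $T_2/T_1$ (take $\gamma=2$ in the $t^{\gamma}$ case, or let $K\to0$ in the $\sinh^2$ bound; equivalently $\psi=\tfrac{n}{2t}$, $\alpha=1$ doubled by the squaring), which matches Corollary 5.2 of \cite{BHLLMY}, so the exponent $n/2$ printed in the statement here appears to be a typo and your argument proves the inequality with $(T_2/T_1)^{n}$ rather than the printed one.
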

\begin{proof} Together with  Theorem \ref{LY1}, Theorem \ref{hak} and Remarks \ref{rem2}, we can complete the proof directly.\end{proof}
\begin{remark}\label{rem3}
The above theorem can be seen a generalization of Theorem 5.1 in \cite{BHLLMY}.
\end{remark}
As an application, we have the following estimate for the associated heat kernel.
\begin{theorem}\label{kernel}
Suppose $G=(V,E)$ satisfies $CDE(n,-K)$ ($K\ge0$) and has maximum degree $D$. Denote by $P_t(x,y)$ the fundamental solution (heat kernel) to the heat equation \eqref{heat} starting at $x$.  Then there exist constants $C_1,C_1',C_2,C_2',C_3$ depending on $K,n,D$ so that for $t>1$,
\begin{equation*}
C_1e^{-\frac{nKt}2}t^{-n}\left(1+\frac23Kt\right)^{\frac{n}4}e^{\left(-\frac{4Dd^2(x,y)}{t-1}\left(1+\frac{K}{3}(t+1)\right)\right)}\le P_t(x,y)\le C_2\frac{\mu(y)}{\mbox{vol}(B(x,\sqrt{t}))}e^{3DKt+\frac{nKt}2},
\end{equation*}
\begin{equation*}
P_t(x,y)\le C_2'\frac{\mu(y)}{\mbox{vol}(B(x,\sqrt{t}))}
\left(\frac{e^{4Kt}-4Kt-1}{e^{2Kt}-2Kt-1}\right)^{\frac{n}{2}}e^{4D\coth(Kt)},
\end{equation*}
and
\begin{equation*}
 P_t(x,y)\ge C_1'\left(e^{2Kt}-2Kt-1\right)^{-\frac{n}2}\exp\left(-\frac{C_3d^2(x,y)}{t-1}\right).
\end{equation*}

\begin{proof}
Following the proof of Theorem 9 in \cite{BHLLMY}, we can obtain the desired result easily with the help of Corollary \ref{harnack}.
\end{proof}

\end{theorem}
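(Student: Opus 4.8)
The plan is to follow the strategy of Theorem~9 in \cite{BHLLMY}, now feeding in the sharper Harnack inequality of Corollary \ref{harnack}. Throughout I fix $x$ and regard $u(z,s)=P_s(x,z)$ as a positive solution of \eqref{heat} in the variable $z$; by symmetry of the heat kernel this is legitimate, and it is precisely the object to which Corollary \ref{harnack} applies. The two displayed families of estimates correspond to the two admissible choices of weight $a(t)$ feeding $\alpha,\psi$ in \eqref{notation1}: the power weight $a(t)=t^{\gamma}$ with $1<\gamma<3$, which produces the polynomial prefactors $t^{-n}$ and the $(1+\tfrac23Kt)^{n/4}$-type factors, and the weight $a(t)=\sinh^2(Kt)$, which produces the factors built from $e^{2Kt}-2Kt-1$. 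In each case $\int_{T_1}^{T_2}\psi(t)/\alpha(t)\,dt$ supplies the $t$-dependent prefactor while $\rho_{\mu_{\max},\alpha,\omega_{\min}}$, estimated as in Remarks \ref{rem2}, supplies the factor exponential in $d^2(x,y)$.

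\textbf{Upper bounds.} The only ingredient beyond Corollary \ref{harnack} is conservation of mass, $\sum_{z}P_t(x,z)\mu(z)\le 1$. To obtain the on-diagonal bound I apply Corollary \ref{harnack} with $T_1=t$, $T_2=2t$ to compare $P_t(x,x)$ with $P_{2t}(x,z)$ for $z\in B(x,\sqrt t)$; since $d(x,z)\le\sqrt t$ the factor $\rho$ is bounded by a constant multiple of $D$ by Remarks \ref{rem2}, so $P_t(x,x)\le e^{C}P_{2t}(x,z)$ with $C$ absorbing $\int\psi/\alpha$. Summing against $\mu(z)$ over $B(x,\sqrt t)$ and invoking mass conservation yields $P_t(x,x)\,\mathrm{vol}(B(x,\sqrt t))\le e^{C}$, that is, $P_t(x,x)\le e^{C}/\mathrm{vol}(B(x,\sqrt t))$. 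One further application of Corollary \ref{harnack}, comparing $P_t(x,y)$ with an on-diagonal value at a nearby time and passing from the mass-conservation sum to a pointwise statement (which is where the measure factor $\mu(y)$ enters), converts this into the stated off-diagonal upper bounds; inserting the explicit $\alpha,\psi$ from the two weights gives the $e^{3DKt+nKt/2}$ and $\bigl((e^{4Kt}-4Kt-1)/(e^{2Kt}-2Kt-1)\bigr)^{n/2}e^{4D\coth(Kt)}$ forms respectively.

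\textbf{Lower bounds.} The on-diagonal lower bound comes from the semigroup identity $P_{2t}(x,x)=\sum_z P_t(x,z)^2\mu(z)$ together with Cauchy--Schwarz: writing $1=\sum_z P_t(x,z)\mu(z)$, restricting the sum to $B(x,\sqrt t)$ and applying Cauchy--Schwarz bounds $\mathrm{vol}(B(x,\sqrt t))\,P_{2t}(x,x)$ from below, provided the mass outside the ball is negligible, a fact guaranteed by the Gaussian off-diagonal upper bound from the previous step. Having secured $P_t(x,x)\ge c/\mathrm{vol}(B(x,\sqrt t))$ up to the appropriate $t$-factor, I apply Corollary \ref{harnack} one last time, comparing the on-diagonal value at time $1$ with $P_t(x,y)$; the path factor $\rho$ now produces the $\exp(-C_3 d^2(x,y)/(t-1))$ term, and the hypothesis $t>1$ is exactly what keeps $T_2-T_1$ bounded away from $0$ so that all prefactors remain finite.

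\textbf{Main obstacle.} The conceptually delicate point is the on-diagonal lower bound, since it does not follow from Harnack alone: it requires controlling the off-ball tail $\sum_{d(x,z)>\sqrt t}P_t(x,z)^2\mu(z)$, and for that one needs a genuine Gaussian off-diagonal upper bound rather than the crude displayed bounds, so the $\rho$-term from Corollary \ref{harnack} must be retained rather than discarded. The remaining difficulty is purely bookkeeping, namely carrying the explicit constants $C_1,C_1',C_2,C_2',C_3$ and the two distinct prefactor shapes through the power weight and the $\sinh$ weight in parallel.
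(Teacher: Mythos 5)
The paper's own ``proof'' is only a pointer to the proof of Theorem 9 in \cite{BHLLMY} together with Corollary \ref{harnack}, so the real benchmark is that argument. Your upper-bound half reproduces it correctly: Harnack comparison between $P_t(x,\cdot)$ and $P_{2t}(x,z)$ for $z\in B(x,\sqrt t)$, summation against $\mu$ over the ball, mass conservation $\sum_z P_t(x,z)\mu(z)\le 1$, and the two weights $a(t)=t^{\gamma}$ and $a(t)=\sinh^2(Kt)$ feeding $\int\psi/\alpha$ and the $\rho$-estimates of Remarks \ref{rem2}. That part is sound and is essentially the paper's intended route.

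The lower bound is where you have a genuine gap, and it is a self-inflicted one. You base the on-diagonal lower bound on $P_{2t}(x,x)=\sum_z P_t(x,z)^2\mu(z)$ plus Cauchy--Schwarz, which requires showing that the mass outside $B(x,\sqrt t)$ is negligible, and you yourself concede this needs a ``genuine Gaussian off-diagonal upper bound rather than the crude displayed bounds.'' Nothing in this paper or in Corollary \ref{harnack} supplies such a bound: the displayed upper bounds carry no decay in $d(x,y)$ at all, and proving Gaussian-type upper bounds on graphs is a substantial separate problem (indeed, for $d(x,y)\gg t$ the graph heat kernel decays in the Poissonian way, $\exp(-cd\log(d/t))$, so the Gaussian form is not even true uniformly). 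So as written this step does not close. The argument of \cite{BHLLMY} that the paper invokes avoids the issue entirely: since $\Delta f(x)\ge -\frac{\deg(x)}{\mu(x)}f(x)$ for $f\ge 0$, one has $\partial_t P_t(x,x)=\Delta P_t(\cdot,x)(x)\ge -D_{\mu}P_t(x,x)$, whence $P_t(x,x)\ge e^{-D_{\mu}t}P_0(x,x)=e^{-D_{\mu}t}/\mu(x)$ (equivalently, the continuous-time walk stays at $x$ with probability $e^{-t\deg(x)/\mu(x)}$). Under the hypotheses of Corollary \ref{harnack} ($\mu=\deg\le D$) this gives $P_1(x,x)\ge c(D)>0$ with a constant depending only on the data the theorem allows, and then a single application of Corollary \ref{harnack} from $(x,1)$ to $(y,t)$ yields exactly the stated prefactors and the factor $\exp\left(-\frac{C_3d^2(x,y)}{t-1}\right)$; this is also why the hypothesis is $t>1$ and why, unlike your intermediate claim $P_t(x,x)\ge c/\mathrm{vol}(B(x,\sqrt t))$, no volume term appears in the stated lower bounds. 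Note the related internal inconsistency in your plan: you ``secure'' a volume-type bound at time $t$ but then compare with ``the on-diagonal value at time $1$,'' for which your Cauchy--Schwarz step provides nothing; the elementary semigroup bound above is what that comparison actually needs.
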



\section{Appendix}
 Let us recall the strong cut-off function defined in \cite{BHLLMY}.
\begin{definition}
Let $(G,V)$ be a graph satisfying $CDE(n,-K)$ for $K\ge0$. We say that the function $\phi:\ V\to [0,1]$ is an $(c,R)-strong$ cut-off function centered at $x_0\in V$ and supported on a set $S\subset V$ if $\phi(x_0)=1,\ \phi(x)=0$ if $x\notin S$ and for any vertex $x\in S$,  either $\phi(x)\le \frac{c(1+R\sqrt{K})}{2R^2}$ or $\phi$ does not vanish in the immediate neighborhood of $x_0$ and $\phi^2(x)\Delta \frac{1}{\phi}(x)\le D_{\mu}\frac{c(1+R\sqrt{K})}{R^2}$ and $\phi^3(x)\Gamma\left(\frac{1}{\phi}\right)(x)\le D_{\mu}\frac{c}{R^2}$, where $c$ is some positive constant only dependent on $n$.
\end{definition}

The usual Cayley graph of $\mathbb{Z}^d$ with the regular or the normalized Laplacian satisfies satisfies CDE(2d,0) and admits a $(100,R)-strong$ cut-off function supported on a ball  of radius $\sqrt{d}R$ centered at $x_0$, cf. \cite{BHLLMY}.

We have the following theorem:
\begin{theorem}\label{local-2}
Let $(G,V)$ be a (finite or infinite) graph satisfying $CDE(n,-K)$ for $K\ge0$. Let $R>0$ and fix $x_0\in V$. Assume that $G$ has a $(c,R)-strong$ cut-off function supported on $S\subset V$ and centered at $x_0$. Let $u:V\times \mathbb{R}\to \mathbb{R}$ a positive function such that $\partial_tu(x,t)=\Delta u(x,t)$ if $x\in S$, then
\begin{align*}
\frac{\Gamma(\sqrt{u})}{u}(x_0,t)-\alpha(t)\frac{\partial_t (\sqrt{u})}{\sqrt{u}}(x_0,t)-\frac12\varphi(t) \le \frac{ncD_{\mu}(1+R\sqrt{K})\alpha^2(t)\eta(t) }{R^2\beta(t)}+\frac{cn^2D_{\mu}(1+D_{\omega})^2\alpha^4(t)\widetilde{\eta}(t)}{4R^2\beta(t)},
\end{align*}
where $\alpha,\varphi$ are defined in \eqref{notation1} and $\beta$ satisfies
{\bf Condition B}:
(1). Condition A holds.
(2). For any $0<t\le T$, $\frac{\beta(t)}{\alpha(t)-1}\le \widetilde{\eta}(T)$ holds for some positive function $\widetilde{\eta}$.
\end{theorem}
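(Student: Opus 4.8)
The plan is to follow the proof of Theorem \ref{local} almost verbatim, replacing the weak cut-off function by the $(c,R)$-strong cut-off function $\phi$ and replacing its crude boundary and gradient bounds by the sharper $O(1/R^2)$ bounds built into the definition. First I would fix $T>0$, set
\begin{align*}
G&=\beta(t)\phi\left(\frac{2\Gamma(\sqrt u)-\alpha(t)\Delta u}{u}-\varphi(t)\right)\\
&=\beta(t)\phi\left(\frac{2(1-\alpha)\Gamma(\sqrt u)-2\alpha\sqrt u\,\Delta\sqrt u}{u}-\varphi\right),
\end{align*}
with $\alpha,\varphi$ as in \eqref{notation1} and $\beta$ satisfying \textbf{Condition B}, and let $(x^*,t^*)$ be a point where $G$ attains its maximum on $V\times[0,T]$. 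As in Theorem \ref{local} I may assume $G(x^*,t^*)>0$, whence $t^*>0$, $\phi(x^*)>0$ and $\Delta\sqrt u(x^*,t^*)<0$. Since $\phi(x_0)=1$, the value $G(x_0,T)=\beta(T)\left(\tfrac{2\Gamma(\sqrt u)-\alpha\Delta u}{u}-\varphi\right)(x_0,T)$ is exactly $2\beta(T)$ times the left-hand side of the asserted inequality, so it suffices to bound $G(x^*,t^*)$, divide by $\beta(T)$, and let $T=t$ vary.

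I then split into the two alternatives furnished by the strong cut-off definition. In the first case $\phi(x^*)\le\frac{c(1+R\sqrt K)}{2R^2}$, I discard the nonpositive term $2(1-\alpha)\Gamma(\sqrt u)/u$ and use the elementary pointwise bound $-\Delta\sqrt u/\sqrt u\le D_\mu$ exactly as in Case (a) of Theorem \ref{local}, obtaining
\begin{equation*}
G(x^*,t^*)\le 2\alpha(t^*)\beta(t^*)\phi(x^*)\,\frac{-\Delta\sqrt u(x^*,t^*)}{\sqrt u(x^*,t^*)}
\le \frac{cD_\mu\,\alpha(t^*)\beta(t^*)\,(1+R\sqrt K)}{R^2},
\end{equation*}
which, after using $\alpha\ge1$, $n\ge1$ and $\beta(t^*)\le\eta(T)$ from \textbf{Condition A}, is absorbed into the first term of the claimed right-hand side.

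The substantive case is the second alternative, where $\phi$ does not vanish near $x_0$ and the bounds $\phi^2\Delta(1/\phi)\le D_\mu\frac{c(1+R\sqrt K)}{R^2}$ and $\phi^3\Gamma(1/\phi)\le D_\mu\frac{c}{R^2}$ hold at $x^*$. Here I run the computation of Case (b) of Theorem \ref{local} unchanged: applying Lemma 4.1 of \cite{BHLLMY} with $F=u/\phi$ and $H=G$ together with \eqref{diff3} yields, at $(x^*,t^*)$,
\begin{equation*}
(\Delta-\partial_t)\left(\frac{u}{\phi}\right)G\ge \frac{uG^2}{n\alpha^2\beta\phi^2},
\qquad\text{hence}\qquad
G(x^*,t^*)\le \frac{n\alpha^2\beta\,\phi^2}{u}\,(\Delta-\partial_t)\left(\frac{u}{\phi}\right).
\end{equation*}
Since $\phi$ is time-independent, the product rule $\Delta(fg)=f\Delta g+g\Delta f+2\Gamma(f,g)$ gives $(\Delta-\partial_t)(u/\phi)=u\Delta(1/\phi)+2\Gamma(u,1/\phi)$, so the task reduces to estimating
\begin{equation*}
\frac{\phi^2}{u}(\Delta-\partial_t)\left(\frac{u}{\phi}\right)=\phi^2\Delta\!\left(\frac1\phi\right)+\frac{2\phi^2}{u}\Gamma\!\left(u,\frac1\phi\right).
\end{equation*}
The first summand is controlled directly by $\phi^2\Delta(1/\phi)\le D_\mu\frac{c(1+R\sqrt K)}{R^2}$ and reproduces the first term of the stated right-hand side.

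The hard part will be the cross term $\frac{2\phi^2}{u}\Gamma(u,1/\phi)$, which is the only place where the genuinely discrete features intervene and where \textbf{Condition B}(2) enters. My plan is to apply Cauchy–Schwarz for the bilinear form, $|\Gamma(u,1/\phi)|\le\sqrt{\Gamma(u)}\sqrt{\Gamma(1/\phi)}$, to insert the strong-cut-off bound $\phi^3\Gamma(1/\phi)\le D_\mu c/R^2$ (which supplies the needed $1/R^2$), and then to use Young's inequality to split off a term proportional to $\frac{\phi^2\Gamma(u)}{u^2}$. The delicate point, absent in the manifold case, is that no exact chain rule $\Gamma(u)=4u\Gamma(\sqrt u)$ holds on graphs; I would instead use the discrete identity $\Gamma(u,g)(x)=2\sqrt{u(x)}\,\Gamma(\sqrt u,g)(x)+\frac1{\mu(x)}\sum_{y\sim x}\omega_{xy}(\sqrt{u(y)}-\sqrt{u(x)})^2(g(y)-g(x))$ to reduce the cross term to $\Gamma(\sqrt u)$-type quantities, and then exploit that $G(x^*,t^*)>0$ forces the a priori bound $\frac{(\alpha-1)\Gamma(\sqrt u)}{u}\le -\alpha\frac{\Delta\sqrt u}{\sqrt u}\le \alpha D_\mu$ at $(x^*,t^*)$. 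It is precisely the factor $1/(\alpha-1)$, combined with the weight $\beta$, that is packaged in the hypothesis $\frac{\beta}{\alpha-1}\le\widetilde\eta(T)$ of \textbf{Condition B}, and a suitable choice of Young parameter (of size $\sim 1/R$, absorbing the relevant powers of $D_\omega$) converts the surviving contribution into the second term $\frac{cn^2D_\mu(1+D_\omega)^2\alpha^4\widetilde\eta}{4R^2\beta}$. Combining the two cases, dividing by $\beta(T)$ and using the arbitrariness of $T$ then yields the stated inequality.
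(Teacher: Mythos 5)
Your setup, Case (a), and the decomposition $\phi^2(\Delta-\partial_t)(u/\phi)=\phi(\Delta-\partial_t)u+u\phi^2\Delta(1/\phi)+2\phi^2\Gamma(1/\phi,u)$ with Cauchy--Schwarz and the strong cut-off bounds all match the paper's proof. But in Case (b) you import the inequality $(\Delta-\partial_t)\left(\frac{u}{\phi}\right)G\ge \frac{uG^2}{n\alpha^2\beta\phi^2}$ from Theorem \ref{local} \emph{unchanged}, and this discards exactly the term that makes the $1/R^2$ conclusion possible. The paper instead expands the square in \eqref{diff3} keeping the cross term with $2(\alpha-1)\Gamma(\sqrt{u})/u$, obtaining at $(x^*,t^*)$
\begin{equation*}
(\Delta-\partial_t)\left(\frac{u}{\phi}\right)G\ge \frac{uG^2}{n\alpha^2\beta\phi^2}+\frac{4(\alpha-1)uG}{n\alpha^2\phi}\frac{\Gamma(\sqrt{u})}{u},
\end{equation*}
so that the maximum-point inequality carries the \emph{negative} quadratic term $-4(\alpha-1)\beta\phi\frac{\Gamma(\sqrt{u})}{u}$ on the right. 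The $O(1/R)$ cross term $\frac{2\sqrt{cD_\mu}(1+D_\omega)n\alpha^2\beta}{R}\sqrt{\phi\Gamma(\sqrt{u})/u}$ is then absorbed by completing the square: with $x=\sqrt{\phi\Gamma(\sqrt{u})/u}$, the bound $bx-ax^2\le b^2/(4a)$ with $a=4(\alpha-1)\beta$ (independent of $R$) and $b\sim 1/R$ yields $\frac{n^2cD_\mu(1+D_\omega)^2\alpha^4\beta}{4(\alpha-1)R^2}$, and this is precisely where \textbf{Condition B}(2), $\beta/(\alpha-1)\le\widetilde{\eta}(T)$, is invoked.

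Your proposed substitute---the a priori bound $\frac{(\alpha-1)\Gamma(\sqrt{u})}{u}\le\alpha D_\mu$ forced by $G(x^*,t^*)>0$, combined with a Young parameter of size $\sim 1/R$---cannot recover $1/R^2$; a power count shows this. After Young with parameter $\varepsilon$ you get $\varepsilon\,\phi\frac{\Gamma(\sqrt{u})}{u}+\frac{b^2}{4\varepsilon}$ with $b\sim 1/R$, and the a priori bound only controls $\phi\Gamma(\sqrt{u})/u$ by the $R$-independent quantity $\alpha D_\mu/(\alpha-1)$. Hence to force $b^2/(4\varepsilon)=O(1/R^2)$ you need $\varepsilon$ of constant order, making the first term $O(1)$; with your choice $\varepsilon\sim 1/R$ both terms are only $O(1/R)$. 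The resolution is that no a priori bound on $\Gamma(\sqrt{u})/u$ is needed at all: the absorption must be done against the retained quadratic term from \eqref{diff3}, which squares the $1/R$ into $1/R^2$. (Two smaller points: the discrete identity you propose for $\Gamma(u,g)$ is off by a factor $\frac12$ on the correction sum and is in any case unnecessary---the paper passes from $\Gamma(u)$ to $\Gamma(\sqrt{u})$ via $\frac{\Gamma(u)}{u^2}\le(1+D_\omega)^2\frac{\Gamma(\sqrt{u})}{u}$, equation (4.25) in \cite{BHLLMY}, which you also allude to; and your treatment of the $(\varphi-\frac{n\alpha a'}{2a}-nK\alpha)$ cross term and the $\frac{a'}{a}-\frac{\beta'}{\beta}$ term via \textbf{Condition A} is fine and matches Theorem \ref{local}.)
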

\begin{proof}
The proof is similar to the one of Theorem \ref{LY1}, except that we assume $\phi$ is a $(c,R)-strong$ cut-off function centered at $x_0$. Let
$$G=\beta(t)\phi\left(\frac{2\Gamma(\sqrt{u})-\alpha(t)\Delta u}{u}-\varphi(t)\right)
=\beta(t)\phi\left(\frac{2(1-\alpha)\Gamma(\sqrt{u})-2\alpha(t)\sqrt{u}\Delta\sqrt{u}}{u}-\varphi(t)\right),$$ where $\beta(t)$ satisfies conditions in the above theorem, and let $(x^*,t^*)$ be the place of where $G$ attains its maximum in $V\times [0,T]$ for any arbitrary but fixed $T>0$. Without loss of any generality, we can assume $G(x^*,t^*)>0$, otherwise there is nothing to prove. It follows $t^*>0, \phi(x^*)>0$ and $\Delta \sqrt{u}(x^*,t^*)<0$. To prove the desired result, let us divide into two cases.
\begin{description}
\item{(a). Assume $\phi(x^*)\le \frac{c(1+R\sqrt{K})}{R^2}$.} As in the proof of Theorem \ref{LY1}, we have  \begin{align*}
    G(x^*,t^*)\le 2\alpha(t^*)\beta(t^*)\phi(x^*)\frac{-\Delta\sqrt{u}(x^*,t^*)}{\sqrt{u}(x^*,t^*)},
    \end{align*}
combining the fact that $\frac{-\Delta\sqrt{u}(x)}{\sqrt{u}(x)}\le D_{\mu}$, hence \begin{align*}
G(x,T)\le G(x^*,t^*)\le \frac{c(1+R\sqrt{K})D_{\mu}\alpha(t^*)\beta(t^*)}{R^2}\le \frac{c(1+R\sqrt{K})D_{\mu}\alpha(T)\eta(T)}{R^2}.
\end{align*}

\item{(b). Now we assume $\phi$ does not vanish in the immediate neighborhood of $x_0$ and $\phi^2(x)\Delta \frac{1}{\phi}(x)\le D_{\mu}\frac{c(1+R\sqrt{K})}{R^2}$ and $\phi^3(x)\Gamma\left(\frac{1}{\phi}\right)(x)\le D_{\mu}\frac{c}{R^2}$. } We can seen from the proof of Theorem \ref{LY1} that, at $(x^*,t^*)$,
    \begin{align*}
    (\Delta-\partial_t)\left(\frac{u}{\phi}\right)G\ge \frac{uG^2}{n\alpha^2\beta\phi^2}+\frac{4(\alpha-1)uG}{n\alpha^2\phi}\frac{\Gamma(\sqrt{u})}{u},
    \end{align*}
   it follows $$
    G(x^*,t^*)\le n\alpha^2(t^*)\beta(t^*) \frac{\phi^2(x^*)}{u(x^*,t^*)}(\Delta-\partial_t)\left(\frac{u}{\phi}\right)(x^*,t^*)-
   4(\alpha(t^*)-1)\beta(t^*)\phi(x^*)\frac{\Gamma(\sqrt{u})}{u}(x^*,t^*) $$
Notice that for any $x\in V,t>0$,
\begin{align*}x
\phi^2(x)(\Delta-\partial_t)\left(\frac{u}{\phi}\right)(x,t)&=\phi(x)(\Delta-\partial_t)u(x,t)+u(x,t)\phi^2(x)\Delta\left(\frac{1}{\phi}\right)(x)+2\phi^2(x)\Gamma\left(\frac{1}{\phi},u\right)(x,t)\\
&\le \frac{cD_{\mu}(1+R\sqrt{K})}{R^2}u(x,t)+2\phi^2(x)\sqrt{\Gamma\left(\frac1{\phi}\right)}\sqrt{\Gamma(u)}\\
&\le \frac{cD_{\mu}(1+R\sqrt{K})}{R^2}u(x,t)+\frac{2\sqrt{cD_{\mu}}}{R}\sqrt{\phi(x)\Gamma(u)},
\end{align*}
applying the fact that $\frac{\Gamma(u)}{u^2}\le (D_{\omega}+1)^2\frac{\Gamma(\sqrt{u})}{u}$,  cf. equation (4.25) in \cite{BHLLMY}, we have at $(x^*,t^*)$,
\begin{align*}
G&\le \frac{ncD_{\mu}(1+R\sqrt{K})\alpha^2\beta}{R^2}+\frac{2\sqrt{cD_{\mu}}(D_{\omega}+1)n\alpha^2\beta}{R}\sqrt{\phi\frac{\Gamma(\sqrt{u})}{u}}
-4(\alpha-1)\beta\cdot \phi\frac{\Gamma(\sqrt{u})}{u}\\
&\le \frac{ncD_{\mu}(1+R\sqrt{K})\alpha^2\beta}{R^2}+\frac{n^2cD_{\mu}(1+D_{\omega})^2\alpha^4\beta}{4(\alpha-1)R^2}.
\end{align*}
Hence, \begin{align*}
G(x_0,T)\le G(x^*,t^*)&\le \frac{ncD_{\mu}(1+R\sqrt{K})\alpha^2(t^*)\beta(t^*)}{R^2}+\frac{n^2cD_{\mu}(1+D_{\omega})^2\alpha^4(t^*)\beta(t^*)}{4(\alpha(t^*)-1)R^2}\\
&\le \frac{ncD_{\mu}(1+R\sqrt{K})\alpha^2(T)\eta(T)}{R^2}+\frac{n^2cD_{\mu}(1+D_{\omega})^2\alpha^4(T)\widetilde{\eta}(T)}{4R^2}.
\end{align*}
\end{description}

Dividing by $\beta(T)$ in the both sides, the arbitrariness of $T$ gives the desired result.\end{proof}
Let us give some examples to show  {\bf condition B} of Theorem \ref{local-2} holds.
\begin{example}
\begin{description}
\item{(1). } Taking $a(t)=t^{\gamma}$ with $1<\gamma<3$. In this case
$\alpha(t)=1+\frac{2Kt}{1+\gamma}$, $\varphi(t)=nK+\frac{nK^2\gamma}{(1+\gamma)}+\frac{n\gamma^2}{4(\gamma-1)t}$, and we can choose $\beta(t)=\exp\left(\int_1^t\frac{(\gamma+\frac{2(\gamma-1)Ks}{1+\gamma})^2}
{2s(\gamma-1)(1+\frac{2Ks}{1+\gamma})^2}ds\right),\eta(t)=\beta(t),
\widetilde{\eta}(t)\equiv constant$ such that  {\bf Condition B} is satisfied.
\item{(2). } Take $a(s)=\sinh^2(Ks)$. In this case,
$$
\alpha(t)=1+\frac{\sinh(Kt)\cosh(Kt)-Kt}{\sinh^2(Kt)},
\varphi(t)=nK(1+\coth(Kt)).
$$
As above we can choose$$ \beta(t)=\tanh(Kt), \eta(t)=\beta(t), \widetilde{\eta}(t)=constant,
$$
such that  {\bf Condition B} is satisfied.

\item{(3). } Take
$a(t)=e^{-\frac{2Kt}{1+\beta}}\left(1-e^{-\frac{2Kt}{1+\beta}}\right)^\beta$ with  $\beta\in(1,2]$. In this case, 
$$
\alpha(t)=e^{\frac{2Kt}{1+\beta}},\
\varphi(t)=\frac{nK\beta^2}{2(\beta-1)(\beta+1)}\frac{e^{\frac{4Kt}{\beta+1}}}{e^{\frac{2Kt}{\beta+1}}-1},
$$
As above we can choose
$$
\beta(t)=e^{-\frac{2Kt}{1+\beta}}\left(1-e^{-\frac{2Kt}{1+\beta}}\right)^{\beta},
\eta(t)=\beta(t), \widetilde{\eta}(t)=1,
$$
such that {\bf Condition B} holds, hence Theorem \ref{local-2} holds for  $ t<\frac{(1+\beta)\log (1+\beta)}{2K}$.
If the curvature dimension inequality $CDE(n,K)$ with $K\ge0$, we can take  $ a(t)=e^{\frac{2Kt}{1+\beta} }\left(e^{\frac{2Kt}{1+\beta}}-1\right)^{\beta}$,  with  $\beta\in(1,2]$. In this case, we can choose $\beta(t)=\eta(t)=e^{\frac{2Kt}{1+\beta} }\left(e^{\frac{2Kt}{1+\beta}}-1\right)^{\beta}$, $\widetilde{\eta}(t)=1$ such that {\bf Condition B} holds, hence Theorem \ref{local-2} holds for any $t>0$.
\end{description}
\end{example}

\begin{proof}[Proof of Remark \ref{rem2}]
\item{(1).} Since
\begin{align*}
\int_{t_i}^{t_{i+1}}ds\int_{t_i}^{s}1+\frac{2Ku}{1+\gamma}du
=\frac{(t_{i+1}-t_i)^2}{2}+\frac{K}{3(1+\gamma)}(t_{i+1}-t_i)^2
\left(t_{i+1}+2t_i\right)
\end{align*}
if follows
\begin{align*}
\sum_{i=0}^{k-1}\int_{t_i}^{t_{i+1}}ds\int_{t_i}^{s}1+\frac{2Ku}{1+\gamma}du
&=\frac{(T_2-T_1)^2}{2k}+\frac{K(T_2-T_1)^2}{3(1+\gamma)k^2}\left(3kT_1+\frac{T_2-T_1}{k}\sum_{i=0}^{k-1}(3i+1)\right)\\
&\le \frac{(T_2-T_1)^2}{2k}+\frac{K(T_2-T_1)^2(T_2+T_1)}{2(1+\gamma)k}.
\end{align*}
Hence,
\begin{equation*}
\rho_{\mu_{max},\alpha,\omega_{min}}(x,y,T_1,T_2)\le \frac{2\mu_{max}k^2}{\omega_{min}(T_2-T_1)}\left(1+\frac{K(T_2+T_1)}{1+\gamma}\right),
\end{equation*}
for all $k\ge d(x,y)$, the desired result follows.
\item{(2).} Since
\begin{align*}
\int_{t_i}^{t_{i+1}}ds\int_{t_i}^s1+\frac{\sinh(Ku)\cosh(Ku)-Ku}{\sinh^2(Ku)}du&=
\int_{t_i}^{t_{i+1}}(s-t_i)+s\coth(Ks)-t_i\coth(Kt_i) ds\\
&\le \frac{(T_2-T_1)^2}{2k^2}+\coth(Kt_i)\int_{t_i}^{t_{i+1}}s-t_ids\\
&=\frac{(T_2-T_1)^2}{2k^2}\left(1+\coth(Kt_i)\right),
\end{align*}
it follows
\begin{align*}
\sum_{i=0}^{k-1}\int_{t_i}^{t_{i+1}}ds\int_{t_i}^s1+\frac{\sinh(Ku)\cosh(Ku)-Ku}{\sinh^2(Ku)}du\le \frac{(T_2-T_1)^2}{2k}\left(1+\frac{\sum_{i=0}^{k-1}\coth(Kt_i)}{k}\right).\end{align*}
Since $\coth(Kx)$ is decreasing, hence
\begin{align*}
\sum_{i=0}^{k-1}\int_{t_i}^{t_{i+1}}ds\int_{t_i}^s1+\frac{\sinh(Ku)\cosh(Ku)-Ku}{\sinh^2(Ku)}du\le \frac{(T_2-T_1)^2}{2k}\left(1+\coth(Kt_1)\right).
\end{align*}
Moreover, if $T_1<T_2< T_1\big(d(x,y)+1\big)$,  then $t_{-1}>0$, we have
\begin{align*}
\sum_{i=0}^{k-1}\int_{t_i}^{t_{i+1}}ds\int_{t_i}^s1+\frac{\sinh(Ku)\cosh(Ku)-Ku}{\sinh^2(Ku)}du&\le \frac{(T_2-T_1)^2}{2k}\left(1+\int_{t_{-1}}^{t_k}\coth(Kx)dx\right)\\
&\le \frac{(T_2-T_1)^2}{2k}\left(1+\frac{1}{K}\ln \frac{\sinh(KT_2)}{\sinh K\left(T_1-\frac{T_2-T_1}{k}\right)}\right)
\end{align*}
for all $k\ge d(x,y)$, the desired result follows.

If $KT_2<\delta$ for some constant $1>\delta>0$, sine
 since
\begin{align*}
\int_{t_i}^{t_{i+1}}ds\int_{t_i}^s1+\frac{\sinh(Ku)\cosh(Ku)-Ku}{\sinh^2(Ku)}du&=
\int_{t_i}^{t_{i+1}}(s-t_i)+s\coth(Ks)-t_i\coth(Kt_i) ds\\
&\stackrel{\xi\in(t_i,s)}{\le} \frac{(T_2-T_1)^2}{2k^2}+\int_{t_i}^{t_{i+1}}\frac{\sinh(2K\xi)-2K\xi}{2\sinh^2(K\xi)}(s-t_i)ds\\
&\le\frac{(T_2-T_1)^2}{k^2},
\end{align*}
as done in the above, we can get the desired result.

\item{(3).} Denote $t_{k+1}=T_1+\frac{1}{k}(T_2-T_1)$. Since  \begin{align*}
\int_{t_i}^{t_{t+1}}ds\int_{t_i}^{s}e^{\frac{2Ku}{1+\beta}}du\le e^{\frac{2Kt_{i+1}}{1+\beta}}\int_{t_i}^{t_{t+1}}(s-t_i)ds=\frac{(t_{i+1}-t_i)^2}{2}e^{\frac{2Kt_{i+1}}{1+\beta}},
\end{align*}
it follows
\begin{align*}
\sum_{i=0}^{k-1}\int_{t_i}^{t_{t+1}}ds\int_{t_i}^{s}e^{\frac{2Ku}{1+\beta}}du&\le \frac{(T_2-T_1)^2}{2k}\left(1+\frac{1}{k}\sum_{i=0}^{k-1}\left(e^{\frac{2Kt_{i+1}}{1+\beta}}-1\right)\right)\\
&\le \frac{(T_2-T_1)^2}{2k}\left(1+\int_{t_1}^{t_{k+1}}e^{\frac{2Kx}{1+\beta}}-1dx\right)\\
&\le \frac{(T_2-T_1)^2}{2k}\left(1+\frac{1+\beta}{2K}\left(e^{\frac{2Kt_{k+1}}{1+\beta}}-e^{\frac{2Kt_{1}}{1+\beta}}\right)-(T_2-T_1)\right)\\
&=\frac{(T_2-T_1)^2}{2k}\left(1+\frac{1+\beta}{2K}e^{\frac{T_2-T_1}{k}}\left(e^{\frac{2KT_2}{1+\beta}}-e^{\frac{2KT_1}{1+\beta}}\right)-(T_2-T_1)\right),
\end{align*}
the desired follows easily.\end{proof}

\end{document}